\algnewcommand\algorithmicinput{\textbf{Input:}}
\algnewcommand\Input{\item[\algorithmicinput]}
\algnewcommand\algorithmicoutput{\textbf{Output:}}
\algnewcommand\Output{\item[\algorithmicoutput]}
\theoremstyle{plain}
\newtheorem{thm}{Theorem}[section]
\newtheorem{prop}[thm]{Proposition}
\newtheorem{lem}[thm]{Lemma}
\theoremstyle{remark}
\theoremstyle{definition}
\newtheorem{defn}[thm]{Definition}
\numberwithin{equation}{section}
\numberwithin{figure}{section}
\numberwithin{table}{section}
\numberwithin{algorithm}{section}
\title{A kernel-based method for Schr{\"o}dinger bridges}
\author[1]{Yumiharu Nakano\thanks{E-mail: nakano@comp.isct.ac.jp}}
\affil[1]{Department of Mathematical and Computing Science,
            Institute of Science Tokyo}
\date{\today}
\begin{document}

\maketitle

\begin{abstract}
We characterize the Schr{\"o}dinger bridge problems by a family of Mckean-Vlasov stochastic control problems with no terminal time distribution constraint.
In doing so, we use the theory of Hilbert space embeddings of probability measures and then describe
the constraint as penalty terms defined by the maximum mean discrepancy in the control problems.
A sequence of the probability laws of the state processes resulting from $\varepsilon$-optimal controls converges to a unique solution of
the Schr{\"o}dinger's problem under mild conditions on given initial and terminal time distributions and an underlying diffusion process.
We propose a neural SDE based deep learning algorithm for the Mckean-Vlasov stochastic control problems.
Several numerical experiments validate our methods.

\begin{flushleft}
{\bf Key words}:  Schr{\"o}dinger bridge, stochastic control, maximum mean discrepancy, deep learning, neural stochastic differential equations.
\end{flushleft}
\begin{flushleft}
{\bf AMS MSC 2020}:
Primary, 93E20; Secondary, 60J60
\end{flushleft}
\end{abstract}





\section{Introduction}\label{sec:1}

Let $\mathbb{W}^d = C([0,1], \mathbb{R}^d)$ be the space of all $\mathbb{R}^d$-valued continuous functions on $[0,1]$.
Denote by $\mathcal{P}(\mathbb{W}^d)$ the totality of Borel probability measures on $\mathbb{W}^d$.
Similarly, denote by $\mathcal{P}(\mathbb{R}^d)$ the set of all Borel probability measures on
$\mathbb{R}^d$.
In this paper, we are concerned with giving an approximate solution to the so-called Schr{\"o}dinger's bridge problem:
for given $P\in\mathcal{P}(\mathbb{W}^d)$ and $\mu_0,\mu_1\in\mathcal{P}(\mathbb{R}^d)$,
find an element that attains
\begin{equation}
\tag{S}
 H^*:=\inf \left\{H(Q|P): Q\in\mathcal{P}(\mathbb{W}^d), \; Q_0=\mu_0, \; Q_1=\mu_1\right\}.
\end{equation}
Here, $H(Q|P)$ is the relative entropy or Kullback-Leibler divergence of $Q\in\mathcal{P}(\mathbb{W}^d)$ with respect to $P$,
defined by
\[
 H(Q|P) =
 \begin{cases}
   \mathbb{E}_Q\left[\log\dfrac{dQ}{dP}\right],  & \text{if}\;\; Q \ll P, \\
  +\infty, &\text{otherwise},
 \end{cases}
\]
where $\mathbb{E}_R$ denotes the expectation with respect to a probability measure $R$ on a measurable space.
Further, $Q_0$ and $Q_1$ denote the marginal distributions at time $0$ and $1$, respectively, i.e.,
$Q_{\ell}(A)=Q(w=\{w(t)\}_{0\le t\le 1}\in\mathbb{W}^d: w(\ell)\in A)$, $A\in\mathcal{B}(\mathbb{R}^d)$, $\ell =0,1$.

We shall briefly describe the background of the problem (S).
The name Schr{\"o}dinger's problem comes from Erwin Schr{\"o}dinger's works
\cite{schr:1931} and \cite{schr:1932}. His aim was to study a transition probability that most likely occurs under
constraints on the initial and terminal time distributions of the empirical measures of $N$-independent Brownian particles.
The law of large numbers tells us that such transition must be a {\it rare} event.
To determine a reasonable could transition probability among these unlikely possibilities, Schr{\"o}dinger used a
particle migration model with space discretization and adopted the {\it maximum entropy principle}.
Then, after taking the continuous limit, he derived a system of partial differential equations for the optimal transition probability,
the so-called {\it Schr{\"o}dinger system} or {\it Schr{\"o}dinger's functional equations} (see Section \ref{sec:3} below).
We refer to Chetrite et al.~\cite{chetrite-etal:2021}, an english translation  of \cite{schr:1931},
for an exposition of Schr{\"o}dinger's original approach.

F{\"o}llmer \cite{fol:1988} discovers Schr{\"o}dinger's problem is nothing but the one of {\it large deviation}.
To be precise, let $X^{(1)}, \ldots, X^{(N)}$ be $N$-independent Brownian motions on $[0,1]$.
By Sanov's theorem (see, e.g., Dembo and Zeitouni \cite{dem-zei:1998}) for the large deviation principles on empirical measures,
the problem of computing the probability that
the initial and terminal time marginals of the empirical measure from $X^{(1)}, \ldots, X^{(N)}$
are given by $\mu_0$ and $\mu_1$ respectively
is nearly equivalent to the problem (S) for a sufficiently large $N$ when $P$ is given by the law of the process
$X_t=X_0 + W_t$, $0\le t\le 1$, where $X_0\sim \mu_0$ and $\{W_t\}$ is a Brownian motion.

The Schr{\"o}dinger problem has developed theoretically in many directions such as
theory of reciprocal processes, time reversal of diffusions, stochastic mechanics,
stochastic control interpretation, and optimal transport problems
(see, Bernstein \cite{ber:1932}, Jamison \cite{jam:1974,jam:1975}, F{\"o}llmer \cite{fol:1985, fol:1986}, Anderson \cite{and:1982},
Dai Pra \cite{dai:1991}, Mikami \cite{mik:2004}, Mikami and Thieullen \cite{mik-thi:2006}, Nagasawa \cite{nag:1989}, Nelson \cite{nel:2020}, and Zambrini \cite{zam:1986}).
We refer to, e.g., Chen et al.~\cite{chen-etal:2021} and L{\'e}onard \cite{leo:2013} for a detailed survey of Schr{\"o}dinger's bridges.

As for practical applications, the Schr{\"o}dinger bridge problem has many ongoing and prominent areas.
In fact, the Schr{\"o}dinger bridge problem can be viewed as an entropic regularization of the so-called optimal transport problem (see \cite{mik:2004}).
So it can be applied to the main applications of optimal transport problems, say, computer vision and traffic flow problems.
Further, it can be used as a type of Markov chain Monte Carlo method since by solving the Schr{\"o}dinger bridge problem, we can generate any number of samples of
a given terminal distribution. Unlike existing methods such as the Metropolis method,
the Schrodinger bridge has already been shown to be effective for multi-modal distributions (see Huang et al.~\cite{hua-etal:2021}).
The {\it diffusion models} have played a key role in the recent success of image generative AI,
which are based on the time reversal of diffusions (see Song et al.~\cite{son-etal:2020} and Ho et al.~\cite{ho-etal:2020}).
Thus the Schr{\"o}dinger problem has an essential connection with the diffusion models.
De Bortoli et al.~\cite{de-etal:2021} applies the Schr{\"o}dinger bridges to generative modeling.

Several numerical methods for the Schr{\"o}dinger problems have been proposed in the literature.
All of them belongs to a class of the iterative proportional fitting. See, e.g., Chen et al.~\cite{chen-etal:2016}, \cite{de-etal:2021},
Pavon et al.~\cite{pav-etal:2021}, and Vargas et al.~\cite{var-etal:2021}. Basically, in the iterative proportional fitting methods, we need to solve
a ``half" bridge problem at each iteration, and this means that the methods can be applied for a limited class of initial and terminal time distributions.
More precisely,  the Hilbert metric based method by \cite{chen-etal:2016} needs the integral evaluations at each iteration, and thus the both $\mu_0$
and $\mu_1$ need to be analytically known. To overcome this problem, \cite{de-etal:2021}, \cite{pav-etal:2021}, and \cite{var-etal:2021} consider
some statistical learning approaches for handling the cases where $\mu_0$ and $\mu_1$ are empirical or easy to sample.
However, the cases where $\mu_0$ is empirical or easy to sample, but $\mu_1$ is analytical and
difficult to sample are still missing, which may appear in transportation planning of crowds.

In the present paper, we aim to propose numerical methods for the problem (S) that can be applied for these missing cases.
To this end, we first characterize (S) with a class of Mckean-Vlasov stochastic control problems.
In doing so, we employ the theory of Hilbert space embeddings of probability measures, as developed in e.g., Sriperumbudur et al.~\cite{sri-etal:2010},
and then describe
the constraint as penalty terms defined by the maximum mean discrepancy in the control problems.
We show that, under mild conditions on $\mu_0$, $\mu_1$ and $P$,
a sequence of the probability laws of the state processes resulting from $\varepsilon$-optimal controls converges to a unique solution of (S).
For numerical solutions for the Mckean-Vlasov stochastic control problems, we propose a deep learning algorithm
based on neural stochastic differential equations (see, e.g., Kidger et al.~\cite{kid-etal:2021a, kid-etal:2021b}).

The present paper is organized as follows:
In Section \ref{sec:2} we review some basic results on the theory of Hilbert space embeddings of probability measures, as well as
give a sufficient condition for which a given kernel-baed metric metrizes the weak topology on $\mathcal{P}(\mathbb{R}^d)$.
In Section \ref{sec:3}, we state our main result of the characterization between (S) and the Mckean-Vlasov control problems, and
describe the numerical methods.
Several numerical experiments are presented in Section \ref{sec:4}.
Section \ref{sec:5} is devoted to a proof of our main theorem.

We close this section by introducing some notation used throughout the paper.
Denote by $x^{\mathsf{T}}$ the transpose of a vector or matrix $x$.
For an open set $A$ in an Euclidean space, we denote by $C(A)$ the space of continuous functions on $A$.
Further, $C_b^m(A)$ stands for the space of all functions on $A$ having bounded and continuous derivatives up to the order $m$.
As usual, we define $C_b^m(A)$ for non-open sets $A$ by extending the definition of continuity and differentiability to the boundary points using appropriate limits
involving elements of $A$.
For a probability measure $\mathbb{Q}$ on a measurable space $(\Omega,\mathcal{F})$ and a random variable $X$ on
$(\Omega,\mathcal{F},\mathbb{Q})$, we denote by $\mathbb{Q}X^{-1}$ the probability law of $X$ under $\mathbb{Q}$.

\section{Hilbert space embeddings of probability measures}\label{sec:2}

As mentioned in Section \ref{sec:1}, our main idea is to use the theory of Hilbert space embeddings of probability measures.
Let $K\in C^1_b(\mathbb{R}^d\times\mathbb{R}^d)$ be a symmetric and strictly positive definite kernel on $\mathbb{R}^d$, i.e.,
$K(x,y)=K(y,x)$ for $x,y\in\mathbb{R}^d$ and for any pairwise distinct $x_1,\ldots,x_N\in \mathbb{R}^d$ and
$\alpha=(\alpha_1,\ldots,\alpha_N)^{\mathsf{T}}\in\mathbb{R}^N\setminus\{0\}$,
\[
 \sum_{j,\ell=1}^N\alpha_j\alpha_{\ell}K(x_j,x_{\ell})>0.
\]
Then there exists a unique Hilbert space $\mathcal{H}\subset C(\mathbb{R}^d)$ such that $K$ is a reproducing kernel on
$\mathcal{H}$ with norm $\|\cdot\|$ (see, e.g., Wendland \cite{wen:2010}).
We consider
\begin{equation}
\label{eq:2.0}
 \gamma_K(\mu,\nu): = \sup_{f\in\mathcal{H}, \, \|f\|\le 1}\left|\int_{\mathbb{R}^d} f d\mu - \int_{\mathbb{R}^d} f d\nu\right|,
 \quad \mu,\nu\in\mathcal{P}(\mathbb{R}^d),
\end{equation}
called the {\it maximum mean discrepancy} (MMD) (see Greton et al.~\cite{gre-etal:2006}).
We assume that $\gamma_K$ defines a metric on $\mathcal{P}(\mathbb{R}^d)$.
In this case, $K$ is called a {\it characteristic kernel}.

One of sufficient conditions for which $K$ is characteristic is that $K$ is
an integrally strictly positive definite kernel, i.e.,
\[
 \int_{\mathbb{R}^d\times\mathbb{R}^d} K(x,y)\mu(dx)\mu(dy) >0
\]
for any finite and non-zero signed Borel measures $\mu$ on $\mathbb{R}^d$
(see Theorem 7 in Sriperumbudur et al.~\cite{sri-etal:2010}).
Examples of integrally strictly positive definite kernels include the Gaussian kernel
$K(x,y)=e^{-\alpha |x-y|^2}$, $x,y\in\mathbb{R}^d$, where $\alpha>0$ is a constant, and
the Mat{\'e}rn kernel $K(x,y)=K_{\alpha}(|x-y|)$, $x,y\in\mathbb{R}^d$, where $K_{\alpha}$ is the modified Bessel function of
order $\alpha>0$.

A main advantage of using $\gamma_K$ rather than the others such as the Prohorov distance,
the total variation distance, or the Wasserstein distance is that it is relatively easy to handle analytically due to its linear structure.
Indeed, by our boundedness assumption, for $\mu,\nu\in\mathcal{P}(\mathbb{R}^d)$
\[
 \gamma_K(\mu,\nu)=\left\|\int_{\mathbb{R}^d}K(\cdot,x)\mu(dx) - \int_{\mathbb{R}^d}K(\cdot,x)\nu(dx)\right\|_{\mathcal{H}},
\]
whence by the reproducing property,
\begin{equation}
\label{eq:2.1}
 \gamma_K(\mu,\nu)^2 = \int_{\mathbb{R}^d\times\mathbb{R}^d}K(x,y)(\mu-\nu)(dx)(\mu-\nu)(dy)
\end{equation}
(see Section 2 in \cite{sri-etal:2010}).

It should be noted that in the cases of Mat{\'e}rn kernel, $\gamma_K$ defined by \eqref{eq:2.0} metrizes
the weak topology on $\mathcal{P}(\mathbb{R}^d)$, whereas in the Gaussian cases this problem remains open
(see again \cite{sri-etal:2010}).
Here we will give an affirmative answer to this open question.
To this end, consider the case where
\begin{enumerate}
\item[(A1)]$K$ is represented as $K(x,y)=\Phi(x-y)$, $x,y\in\mathbb{R}^d$, for some continuous and integrable function $\Phi$ on $\mathbb{R}^d$ such that
 its Fourier transform $\widehat{\Phi}$ is also integrable on $\mathbb{R}^d$ and satisfies $\widehat{\Phi}(\xi)>0$ for any $\xi\in\mathbb{R}^d$.
\end{enumerate}
The following result is a generalization of Theorem 24 in \cite{sri-etal:2010}:
\begin{prop}
\label{prop:2.1}
Suppose that $(A1)$ holds. Then $\gamma=\gamma_K$ is a metric on $\mathcal{P}(\mathbb{R}^d)$ that metrizes the weak topology.
\end{prop}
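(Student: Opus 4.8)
The plan is to show that on $\mathcal{P}(\mathbb{R}^d)$ the topology induced by $\gamma_K$ and the weak topology have exactly the same convergent sequences; since the weak topology on $\mathcal{P}(\mathbb{R}^d)$ is metrizable ($\mathbb{R}^d$ being Polish) and $\gamma_K$ is a metric, this forces the two topologies to coincide. The starting point is a spectral formula for $\gamma_K$. Under (A1), $\Phi$ and $\widehat{\Phi}$ are integrable and $\Phi$ is continuous, so Fourier inversion gives $\Phi(x)=(2\pi)^{-d}\int_{\mathbb{R}^d}\widehat{\Phi}(\xi)e^{ix\cdot\xi}\,d\xi$; substituting this into \eqref{eq:2.1} and applying Fubini (legitimate since $\widehat{\Phi}\in L^1$ and $\mu-\nu$ is a finite signed measure with a bounded integrand) yields
$\gamma_K(\mu,\nu)^2=(2\pi)^{-d}\int_{\mathbb{R}^d}|\widehat{\mu}(\xi)-\widehat{\nu}(\xi)|^2\widehat{\Phi}(\xi)\,d\xi$,
where $\widehat{\mu}$ denotes the characteristic function of $\mu$. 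In particular, since $\widehat{\Phi}>0$ everywhere and characteristic functions are continuous, $\gamma_K(\mu,\nu)=0$ forces $\widehat{\mu}\equiv\widehat{\nu}$, hence $\mu=\nu$; thus $\gamma_K$ is indeed a metric (i.e.\ $K$ is characteristic), consistently with the standing hypothesis of the section.

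\textbf{Easy direction.} Weak convergence $\mu_n\Rightarrow\mu$ implies $\gamma_K(\mu_n,\mu)\to0$. Since $K\in C^1_b(\mathbb{R}^d\times\mathbb{R}^d)$ is bounded and continuous, expanding $\gamma_K(\mu_n,\mu)^2$ via \eqref{eq:2.1} into the three integrals $\iint K\,d\mu_n d\mu_n$, $\iint K\,d\mu_n d\mu$, $\iint K\,d\mu d\mu$ and using that $\mu_n\otimes\mu_n\Rightarrow\mu\otimes\mu$ and $\mu_n\otimes\mu\Rightarrow\mu\otimes\mu$ on $\mathbb{R}^d\times\mathbb{R}^d$, each term converges to $\iint K\,d\mu d\mu$, so $\gamma_K(\mu_n,\mu)^2\to0$. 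Equivalently, the identity map from the weak topology to the $\gamma_K$-topology is continuous.

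\textbf{Converse direction.} Assume $\gamma_K(\mu_n,\mu)\to0$; the goal is $\mu_n\Rightarrow\mu$. From the spectral formula, $\int_{\mathbb{R}^d}|\widehat{\mu}_n-\widehat{\mu}|^2\widehat{\Phi}\,d\xi\to0$; since $\widehat{\Phi}$ is continuous (Riemann--Lebesgue) and strictly positive, it is bounded below by a positive constant on each compact set, so $\widehat{\mu}_n\to\widehat{\mu}$ in $L^2_{\mathrm{loc}}(\mathbb{R}^d,\mathrm{Leb})$, and as $|\widehat{\mu}_n|,|\widehat{\mu}|\le1$, also in $L^1_{\mathrm{loc}}$. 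I then establish tightness of $\{\mu_n\}$ by a genuinely $d$-dimensional tail estimate: Fubini gives $\int_{[-\delta,\delta]^d}(1-\mathrm{Re}\,\widehat{\mu}(\xi))\,d\xi=(2\delta)^d\int_{\mathbb{R}^d}\bigl(1-\prod_{j=1}^d\tfrac{\sin\delta x_j}{\delta x_j}\bigr)\mu(dx)$, and since $\bigl|\prod_{j}\tfrac{\sin\delta x_j}{\delta x_j}\bigr|\le(\delta\max_j|x_j|)^{-1}\le\sqrt{d}/(\delta|x|)$, the integrand exceeds $1/2$ whenever $|x|>2\sqrt{d}/\delta$; hence
$\mu(\{|x|>2\sqrt{d}/\delta\})\le 2(2\delta)^{-d}\int_{[-\delta,\delta]^d}(1-\mathrm{Re}\,\widehat{\mu})\,d\xi$.
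Given $\varepsilon>0$, continuity of $\widehat{\mu}$ at $0$ lets me choose $\delta$ making the right-hand side for $\mu$ below $\varepsilon/2$; $L^1_{\mathrm{loc}}$-convergence then makes it below $\varepsilon$ for all $\mu_n$ with $n$ large, while the finitely many remaining $\mu_n$ are tight individually, so $\{\mu_n\}$ is uniformly tight. Finally, by Prokhorov every subsequential weak limit $\nu$ of $\{\mu_n\}$ exists; the easy direction gives $\gamma_K(\mu_{n_k},\nu)\to0$ along the relevant subsequence, and combined with $\gamma_K(\mu_{n_k},\mu)\to0$ and the triangle inequality this yields $\gamma_K(\mu,\nu)=0$, so $\nu=\mu$. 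Tightness plus uniqueness of the subsequential limit gives $\mu_n\Rightarrow\mu$, i.e.\ the identity map from the ($\gamma_K$-metric, hence first-countable) $\gamma_K$-topology to the weak topology is continuous. Together with the easy direction this proves the two topologies agree.

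\textbf{Main obstacle.} The substance is entirely in the tightness step. The natural temptation is to run the classical one-dimensional argument ``closeness of characteristic functions near the origin controls the tails'' coordinate by coordinate, but the integrals appearing there live on the coordinate axes, which are Lebesgue-null, whereas our hypothesis only delivers convergence of $\widehat{\mu}_n$ in $L^1_{\mathrm{loc}}$ (or a.e.\ along a subsequence) — useless on null sets. The product-of-sine-cardinal estimate above circumvents this by keeping all integrals over the full-dimensional cube $[-\delta,\delta]^d$, precisely the sets on which $L^1_{\mathrm{loc}}$-convergence is informative; checking the elementary bound on $\prod_j\tfrac{\sin\delta x_j}{\delta x_j}$ and the resulting tail inequality is the only genuine computation, and it is routine.
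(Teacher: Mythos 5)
Your proof is correct, and in the hard direction it takes a genuinely different route from the paper. Both proofs start from the same spectral identity $\gamma_K(\mu,\nu)^2=c\int|\widehat{\mu}-\widehat{\nu}|^2\widehat{\Phi}\,d\xi$ and the same easy direction, but from there the paper extracts a subsequence along which $\widehat{\mu}_{n_k}\to\widehat{\mu}$ Lebesgue-a.e.\ and then invokes Glivenko's theorem (the a.e.\ version of the L\'evy continuity theorem, cited from It\^o and Billingsley) to get weak convergence, finishing with the subsequence-of-subsequence argument. You instead stay at the level of $L^1_{\mathrm{loc}}$ convergence of characteristic functions and prove tightness of $\{\mu_n\}$ directly via the multidimensional truncation inequality
\[
\mu(\{|x|>2\sqrt{d}/\delta\})\le 2(2\delta)^{-d}\int_{[-\delta,\delta]^d}(1-\mathrm{Re}\,\widehat{\mu}(\xi))\,d\xi,
\]
then identify all subsequential limits using Prokhorov, the easy direction, and the triangle inequality for $\gamma_K$. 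I checked the sinc-product bound and the Fubini step; they are right, and your observation that the classical one-dimensional tail estimate is useless here (it lives on Lebesgue-null coordinate axes, where $L^1_{\mathrm{loc}}$ information says nothing) is exactly the point that makes the full-dimensional cube estimate necessary. The trade-off: the paper's argument is shorter but leans on a less standard citation for the a.e.\ continuity theorem, whereas yours is longer but self-contained and elementary, and it makes transparent why convergence of $\widehat{\mu}_n$ in $L^1_{\mathrm{loc}}$ over sets of full dimension is enough. One cosmetic nit: the continuity of $\widehat{\Phi}$ follows from dominated convergence for $L^1$ functions; Riemann--Lebesgue is the decay at infinity, which you do not need.
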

\begin{proof}
By the condition (A1), we can apply the Fourier inversion formula to obtain
\begin{equation}
\label{eq:5.0}
\Phi(x) = \int_{\mathbb{R}^d}\rho(\xi)e^{i\xi^{\mathsf{T}}x}d\xi, \quad x\in\mathbb{R}^d,
\end{equation}
where $i$ denotes the imaginary unit and $\rho(\xi)=(2\pi)^{-d/2}\widehat{\Phi}(\xi)$, $\xi\in\mathbb{R}^d$.
For any $\mu\in\mathcal{P}(\mathbb{R}^d)$, denote by $\tilde{\mu}$ the characteristic function of $\mu$. Then by \eqref{eq:2.1} and \eqref{eq:5.0}
\begin{equation}
\label{eq:5.1}
\gamma(\mu,\nu)^2 = \int_{\mathbb{R}^d}\int_{\mathbb{R}^d}\int_{\mathbb{R}^d}e^{i\xi^{\mathsf{T}}(x-y)}\rho(\xi)(\mu-\nu)(dx)(\mu-\nu)(dy)d\xi
= \int_{\mathbb{R}^d}|\tilde{\mu}(\xi) - \tilde{\nu}(\xi)|^2 \rho(\xi) d\xi
\end{equation}
for $\mu, \nu\in\mathcal{P}(\mathbb{R}^d)$. 
This shows that $\gamma$ is a metric on $\mathcal{P}(\mathbb{R}^d)$ and whenever a sequence of probability measures 
$\{\mu_n\}$ converges weakly to a probability measure $\mu$ the distance $\gamma(\mu_n,\mu)$ tends to zero.
Suppose conversely that $\gamma(\mu_n,\mu)\to 0$.
Then \eqref{eq:5.1} and the positivity of $\rho$ means that there exists a subsequence $\mu_{n_k}$ such that
$\tilde{\mu}_{n_k}(\xi)\to \tilde{\mu}(\xi)$, $d\xi$-a.e.
By Glivenko's theorem and its proof, that a sequence in $\mathcal{P}(\mathbb{R}^d)$ converges weakly to some probability measure in $\mathcal{P}(\mathbb{R}^d)$ is
equivalent to the almost everywhere convergence of their characteristic functions
(see, e.g., Theorem 2.6.4 in It{\^o} \cite{ito:1984} and Theorem 26.3 in Billingsley \cite{bil:1995}).
Thus we deduce that $\{\mu_{n_k}\}$ converges weakly to $\mu$.

Consequently, we have shown that any subsequence of $\{\mu_n\}$ contains a further subsequence that converges weakly to $\mu$.
Therefore, by Theorem 2.6 in \cite{bil:1999}, we conclude that $\mu_n\to \mu$.
\end{proof}

Let $\mu_1\in\mathcal{P}(\mathbb{R}^d)$ as in Section \ref{sec:1}. Define
\begin{equation}
\label{eq:2.2}
 K_1(x,y) := K(x,y) - \int_{\mathbb{R}^d}K(x,y^{\prime})\mu_1(dy^{\prime}) - \int_{\mathbb{R}^d} K(x^{\prime},y)\mu_1(dx^{\prime}).
\end{equation}
Then, by \eqref{eq:2.1},
\begin{equation}
\label{eq:2.3}
\gamma_K(\mu,\mu_1)^2=\int_{\mathbb{R}^d}\int_{\mathbb{R}^d}K_1(x,y)\mu(dx)\mu(dy)
+ \int_{\mathbb{R}^d\times\mathbb{R}^d} K(x^{\prime},y^{\prime})\mu_1(dx^{\prime})\mu_1(dy^{\prime}).
\end{equation}
If two random variables $X$ and $\tilde{X}$ on some probability space $(\Omega,\mathcal{F},\mathbb{P})$
are mutually independent and both follow $\mu$, then we have
\begin{equation}
\label{eq:2.4}
 \int_{\mathbb{R}^d}\int_{\mathbb{R}^d}K_1(x,y)\mu(dx)\mu(dy) = \mathbb{E}_{\mathbb{P}}\left[K_1(X,\tilde{X})\right].
\end{equation}
Moreover, given IID samples $X_1,\ldots,X_M\sim \mu$ and $Y_1,\ldots,Y_M\sim\mu_1$, an unbiased estimator of $\gamma^2(\mu,\mu_1)$ is given by
\begin{equation}
\label{eq:2.5}
 \bar{\gamma}_K(\mu,\mu_1) := \frac{1}{M(M-1)}\sum_i\sum_{j\neq i} K(X_i,X_j) - \frac{2}{M^2}\sum_{i,j}K(X_i,Y_j) + \frac{1}{M(M-1)}\sum_i\sum_{j\neq i}K(Y_i,Y_j)
\end{equation}
(see \cite{gre-etal:2006}).

\section{Kernel-based approximate bridges}\label{sec:3}

\subsection{Reduction to Mckean-Vlasov stochastic control problems}

Let $(\Omega,\mathcal{F},\mathbb{P})$ be an atomless probability space equipped with a filtration
$\mathbb{F}=\{\mathcal{F}_t\}_{0\le t\le 1}$ satisfying the usual conditions.
Let $\{W_t\}_{0\le t\le 1}$ be an $m$-dimensional standard $\mathbb{F}$-Brownian motion on
$(\Omega,\mathcal{F},\mathbb{P})$.

Recall from Section \ref{sec:1} that $\mu_0$ and $\mu_1$ are assumed to be the initial and terminal distributions in the problem (S).
We will impose the following condition:
\begin{enumerate}
\item[(A2)] $\mu_0$ and $\mu_1$ have positive densities $\rho_0$ and $\rho_1$, respectively. Further, 
\[
 \int_{\mathbb{R}^d}\left\{|x|^2\rho_0(x) + (|x|^2 + \log\rho_1(x))\rho_1(x)\right\}dx <\infty. 
\]
\end{enumerate}
Also, choose an $\mathcal{F}_0$-measurable random variable $\xi\sim \mu_0$. Then by (A2) we have $\mathbb{E}|\xi|^2<\infty$.

Let $b:[0,1]\times\mathbb{R}^d\to\mathbb{R}^d$ and $\sigma:[0,1]\times\mathbb{R}^d\to\mathbb{R}^{d\times m}$ be Borel measurable.
Denote $a=\sigma\sigma^{\mathsf{T}}$. Assume the following:
\begin{enumerate}
 \item[(A3)] The functions $b$ and $\sigma$ are bounded in $[0,1]\times\mathbb{R}^d$ and Lipschitz continuous with respect to  $x$ uniformly in $t$.
 \item[(A4)] For $i,j=1,\ldots,d$,
 the funciotn $a_{ij}\in C^1_b([0,1]\times\mathbb{R}^d)$ and $\partial a_{ij}/\partial x_k$ is H{\"o}lder continuous with respect to both $t$ and $x$. Moreover,
 there exists a positive constant $c_0$ such that
\[
 \xi^{\mathsf{T}}a(t,x)\xi \ge c_0|\xi|^2, \quad t\in [0,1], \;\; x,y\in\mathbb{R}^d.
\]
\end{enumerate}
Under (A2)--(A4), there exists a unique strong solution $\{X_t\}_{0\le t\le 1}$ of the stochastic differential equation (SDE)
\begin{equation}
\label{eq:3.0}
 dX_t = b(t,X_t)dt + \sigma(t,X_t)dW_t, \quad 0\le t\le 1, \;\; X_0=\xi.
\end{equation}
Further, $\{X_t\}$ has a transition density $p(t,x,s,y)$, and so we have
\[
 \mathbb{P}(X_t\in A) = \int_A\int_{\mathbb{R}^d}p(0,x,t,y)\mu_0(dx)dy, \quad A\in\mathcal{B}(\mathbb{R}^d), \;\; t\in [0,1]
\]
(see, e.g., Karatzas and Shreve \cite[Chapter 5]{kar-shr:1991}).
Moreover, there exist positive constants $C_1\ge 1$ and $c_1\le 1$ such that
\begin{equation}
\label{eq:3.1}
 \frac{1}{C_1t^{d/2}}\exp\left(-\frac{c_1|x-y|^2}{2t}\right)\le p(0,x,t,y)\le 
 \frac{C_1}{t^{d/2}}\exp\left(-\frac{c_1|x-y|^2}{2t}\right), \quad t\in (0,1], \;\; x,y\in\mathbb{R}^d
\end{equation}
(see Aronson \cite{aro:1967}).  In particular, the function $(x,y)\mapsto p(0,x,1,y)$ is positive and bounded on $\mathbb{R}^d\times\mathbb{R}^d$.

We will work in the situation where the prior measure $P$ in the problem (S) satisfies
\begin{equation}
\tag{A5}
 P = \mathbb{P}X^{-1}.
\end{equation}
The conditions (A2) and \eqref{eq:3.1} imply that there exists a unique pair $(\mu_0^*,\mu_1^*)$ of $\sigma$-finite measures such that the so-called
{\it Schr{\"o}dinger system} or {\it Schr{\"o}dinger's functional equation}
\begin{equation}
\label{eq:3.2}
\begin{aligned}
 &\mu_0^*(dx)\int_{\mathbb{R}^d}p(0,x,1,y)\mu_1^*(dy) = \mu_0(dx), \\
 &\mu_1^*(dy) \int_{\mathbb{R}^d} p(0,x,1,y)\mu_0^*(dx) = \mu_1(dy)
\end{aligned}
\end{equation}
holds and $\mu_0^*\sim\mu_0$, $\mu_1^*\sim\mu_1$. 
See \cite{jam:1974} and Nutz \cite{nut:2022} for a proof. See also Beurling \cite{beu:1960} and L{\'e}onard \cite{leo:2019}.
The uniqueness here is understood up to the transformation $(\mu_0^*,\mu_1^*)\mapsto (\kappa\mu_0^*,\kappa^{-1}\mu_1^*)$ for any $\kappa>0$.

Our approach is to reduce the problem (S) to a stochastic control problem where the constraint of terminal time distributions is described by
the kernel-based metric on the probability measures discussed in Section \ref{sec:2}.
We refer to, e.g., \cite{leo:2013} and \cite{chen-etal:2021} for a formal connection between the Schr{\"o}dinger's bridges and stochastic control problems.
Here let us introduce a weak formulation of stochastic control problems as described in Fleming and Soner \cite{fle-son:2006} and
Yong and Zhou \cite{yon-zho:1999}.

Let $\mathcal{U}$ be the set of all $\mathbb{R}^m$-valued Borel measurable function $u$ on $[0,1]\times\mathbb{R}^d$ such that
$u\in C([0,1)\times\mathbb{R}^d)$,
\begin{equation}
\label{eq:3.3a}
 \int_0^1|u(t,X_t)|^2dt < \infty, \quad\mathbb{P}\text{-a.s.},
\end{equation}
and
\begin{equation}
\label{eq:3.3b}
\mathbb{E}\left[\exp\left(\int_0^1u(t,X_t)^{\mathsf{T}}dW_t-\frac{1}{2}\int_0^1|u(t,X_t)|^2dt\right)\right] = 1,
\end{equation}
where we have denoted $\mathbb{E}=\mathbb{E}_{\mathbb{P}}$.
We call $u\in\mathcal{U}$ an \textit{admissible control function}.
Then for any admissible control function $u\in\mathcal{U}$, by Girsanov-Maruyama's theorem, the process
\[
 B_t^u := W_t - \int_0^tu(t,X_t)dt, \quad 0\le t\le 1,
\]
is an $\mathbb{F}$-Brownian motion under the probability measure $\mathbb{Q}^u$ on $(\Omega,\mathcal{F})$ defined by
\[
 \frac{d\mathbb{Q}^u}{d\mathbb{P}} = \exp\left(\int_0^1u(t,X_t)^{\mathsf{T}}dW_t - \frac{1}{2}\int_0^1|u(t,X_t)|^2dt\right).
\]
This means that $(X, B^u, \Omega,\mathcal{F},\mathbb{F},\mathbb{Q}^u)$ is a weak solution of the controlled SDE
\[
 dY_t = (b(t,Y_t) + \sigma(t,Y_t)u(t,X_t))dt + \sigma(t,Y_t)dW_t.
\]

Next, recall from Section \ref{sec:2} that the terminal time distribution constraint can be characterized by the metric $\gamma$, i.e.,
for $u\in\mathcal{U}$, the law $\mathbb{Q}^u(X_1)^{-1}$ coincides with $\mu_1$ if and only if
$\gamma(\mathbb{Q}^u(X_1)^{-1},\mu_1)=0$. In this case, by \eqref{eq:2.3} and \eqref{eq:2.4}, $\gamma(\mathbb{Q}^u(X_1)^{-1},\mu_1)$ is represented as
\[
 \gamma(\mathbb{Q}^u(X_1)^{-1},\mu_1)^2 = \mathbb{E}_{\mathbb{Q}^u}\left[K_1(X_1, \tilde{X}_1^u)\right]
 + \int_{\mathbb{R}^d\times\mathbb{R}^d} K(x^{\prime},y^{\prime})\mu_1(dx^{\prime})\mu_1(dy^{\prime}),
\]
where $K_1$ is as in \eqref{eq:2.2}, and $\tilde{X}_1^u$ denotes an independent copy of $X_1$ under $\mathbb{Q}^u$.

This leads to the following Mckean-Vlasov type stochastic control problem: for $\lambda>0$
\begin{equation}
\tag{C$_{\lambda}$}
 J^*_{\lambda}:= \inf_{u\in\mathcal{U}} J_{\lambda}(u),
\end{equation}
where for $u\in\mathcal{U}$,
\[
 J_{\lambda}(u)=\mathbb{E}_{\mathbb{Q}^u}\left[\int_0^1|u(t,X_t)|^2dt\right] + \lambda  \gamma(\mathbb{Q}^u(X_1)^{-1},\mu_1)^2.
\]
Let $\{\varepsilon_n\}_{n=1}^{\infty}$ and $\{\lambda_n\}_{n=1}^{\infty}$ be positive sequences such that
\[
 \varepsilon_n \searrow 0, \quad \lambda_n\nearrow + \infty, \quad n\to\infty.
\]
Then for each $n\ge 1$ choose $\varepsilon_n$-optimal $u_n\in\mathcal{U}$ for the problem (C$_{\lambda_n}$), i.e., take $u_n\in\mathcal{U}$ such that
\[
 J_{\lambda_n}(u_n)\le J_{\lambda_n}^* + \varepsilon_n.
\]

Here is our main result.
\begin{thm}
\label{thm:3.1}
Suppose that $(A1)$--$(A5)$ hold. Then $H^*<\infty$ and
\[
 H^* = \frac{1}{2}\sup_{\lambda>0}\inf_{u\in\mathcal{U}}J_{\lambda}(u).
\]
Moreover, with the sequence $\{u_n\}$ we have
\begin{align}
\label{eq:3.5}
 &\lim_{n\to \infty}\sqrt{\lambda_n}\gamma(\mathbb{Q}^{u_n}(X_1)^{-1},\mu_1)=0, \\
\label{eq:3.6}
 &\frac{1}{2}\lim_{n\to \infty} J_{\lambda_n}(u_n) = H^*.
\end{align}
\end{thm}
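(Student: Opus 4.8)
The plan is to establish the chain $2H^{*}=\sup_{\lambda>0}V_{\lambda}$, where $V_{\lambda}:=\inf_{u\in\mathcal{U}}J_{\lambda}(u)$, and then to read off \eqref{eq:3.5}--\eqref{eq:3.6} from it. Throughout I write $Q^{u}:=\mathbb{Q}^{u}(X)^{-1}$ for the path law attached to a control $u\in\mathcal{U}$ and $Q^{u}_{1}$ for its time-$1$ marginal. Two preliminary facts come first. (i) $H^{*}<\infty$: disintegrating the product coupling $\pi=\mu_{0}\otimes\mu_{1}$ through the family of $P$-bridges pinned at times $0$ and $1$ produces $Q^{\pi}\in\mathcal{P}(\mathbb{W}^{d})$ with $Q^{\pi}_{0}=\mu_{0}$, $Q^{\pi}_{1}=\mu_{1}$ and, by the chain rule for relative entropy, $H(Q^{\pi}|P)=H(\pi|P_{01})$ with $P_{01}(dx,dy)=\mu_{0}(dx)p(0,x,1,y)dy$; the bound $-\log p(0,x,1,y)\le\log C_{1}+c_{1}(|x|^{2}+|y|^{2})$ from \eqref{eq:3.1} together with the moment/entropy integrability in (A2) makes $H(\pi|P_{01})$ finite. (ii) \emph{Girsanov bookkeeping}: for $u\in\mathcal{U}$, conditional Jensen applied to $x\mapsto x\log x$ gives $H(Q^{u}|P)\le H(\mathbb{Q}^{u}|\mathbb{P})$, while the substitution $W=B^{u}+\int u\,dt$ evaluates $H(\mathbb{Q}^{u}|\mathbb{P})=\tfrac12\mathbb{E}_{\mathbb{Q}^{u}}\int_{0}^{1}|u(t,X_{t})|^{2}dt$ whenever the right-hand side is finite; hence
\[
 J_{\lambda}(u)\ \ge\ 2H(Q^{u}|P)+\lambda\,\gamma\bigl(Q^{u}_{1},\mu_{1}\bigr)^{2},\qquad u\in\mathcal{U},
\]
with equality whenever $d\mathbb{Q}^{u}/d\mathbb{P}$ is $\sigma(X)$-measurable.

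For the upper bound $V_{\lambda}\le 2H^{*}$ I would use the Schr\"odinger-system pair $(\mu_{0}^{*},\mu_{1}^{*})$ from \eqref{eq:3.2} and the associated $h$-transform $Q^{*}=f(X_{0})g(X_{1})\cdot P$, which is the unique minimizer in (S); here $g=d\mu_{1}^{*}/dy$ and $h(t,x):=\int_{\mathbb{R}^{d}}p(t,x,1,y)\mu_{1}^{*}(dy)=\mathbb{E}_{P}[g(X_{1})\mid X_{t}=x]$. Using \eqref{eq:3.1} one checks that $h$ is finite and positive and, by parabolic regularity of the fundamental solution under (A3)--(A4) (differentiating under the integral with Gaussian bounds), of class $C^{1,2}$ on $[0,1)\times\mathbb{R}^{d}$. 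Then $M_{t}:=f(X_{0})h(t,X_{t})$ is a true $P$-martingale with $dM_{t}=M_{t}\,u^{*}(t,X_{t})^{\mathsf{T}}dW_{t}$, where $u^{*}:=\sigma^{\mathsf{T}}\nabla_{x}\log h\in C([0,1)\times\mathbb{R}^{d})$; since $d\mathbb{Q}^{u^{*}}/d\mathbb{P}=M_{1}=f(X_{0})g(X_{1})$ is a function of $X$, we get $u^{*}\in\mathcal{U}$, $\mathbb{Q}^{u^{*}}(X)^{-1}=Q^{*}$, the equality case above gives $\mathbb{E}_{\mathbb{Q}^{u^{*}}}\int_{0}^{1}|u^{*}(t,X_{t})|^{2}dt=2H(Q^{*}|P)=2H^{*}$, and as $Q^{*}_{1}=\mu_{1}$ the penalty vanishes: $J_{\lambda}(u^{*})=2H^{*}$ for every $\lambda>0$, whence $V_{\lambda}\le 2H^{*}<\infty$.

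The reverse inequality and the two limits come from one compactness argument. For $\sup_{\lambda}V_{\lambda}\ge 2H^{*}$, fix $\delta>0$ and choose $u_{\lambda}$ with $J_{\lambda}(u_{\lambda})\le V_{\lambda}+\delta$; then $H(Q^{u_{\lambda}}|P)\le H^{*}+\delta/2$ is bounded, so along any $\lambda_{k}\to\infty$ the compactness of sublevel sets of $H(\cdot|P)$ on $\mathcal{P}(\mathbb{W}^{d})$ yields $Q^{u_{\lambda_{k}}}\Rightarrow\bar{Q}$ (up to a subsequence). Continuity of the time-$0$ and time-$1$ projections gives $\bar{Q}_{0}=\mu_{0}$; since $\lambda_{k}\gamma(Q^{u_{\lambda_{k}}}_{1},\mu_{1})^{2}$ is bounded and $\lambda_{k}\to\infty$ we get $\gamma(Q^{u_{\lambda_{k}}}_{1},\mu_{1})\to 0$, and because weak convergence implies $\gamma$-convergence (Proposition \ref{prop:2.1}) and $\gamma$ is a metric, $\bar{Q}_{1}=\mu_{1}$; thus $\bar{Q}$ is feasible for (S) and, by lower semicontinuity, $2H^{*}\le 2H(\bar{Q}|P)\le\liminf_{k}2H(Q^{u_{\lambda_{k}}}|P)\le 2H^{*}+\delta$, so $\delta\downarrow0$ proves $2H^{*}=\sup_{\lambda}V_{\lambda}$. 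Applying the same extraction to the $\varepsilon_{n}$-optimal sequence $\{u_{n}\}$: from $J_{\lambda_{n}}(u_{n})\le V_{\lambda_{n}}+\varepsilon_{n}$ with $V_{\lambda_{n}}\uparrow 2H^{*}$ we get $\limsup_{n}J_{\lambda_{n}}(u_{n})\le 2H^{*}$ and $\limsup_{n}H(Q^{u_{n}}|P)\le H^{*}$, while every weak subsequential limit of $\{Q^{u_{n}}\}$ is feasible for (S), which forces $\liminf_{n}H(Q^{u_{n}}|P)\ge H^{*}$ and hence $\liminf_{n}J_{\lambda_{n}}(u_{n})\ge\liminf_{n}2H(Q^{u_{n}}|P)\ge 2H^{*}$; therefore $J_{\lambda_{n}}(u_{n})\to 2H^{*}$ and $H(Q^{u_{n}}|P)\to H^{*}$, which is \eqref{eq:3.6}, and then $\lambda_{n}\gamma(Q^{u_{n}}_{1},\mu_{1})^{2}\le J_{\lambda_{n}}(u_{n})-2H(Q^{u_{n}}|P)\to 0$ gives \eqref{eq:3.5}.

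I expect the real work to be the second paragraph: showing that $h(t,x)$ is finite and $C^{1,2}$ on $[0,1)\times\mathbb{R}^{d}$ although $\mu_{1}^{*}$ is only $\sigma$-finite, and that the drift $u^{*}=\sigma^{\mathsf{T}}\nabla_{x}\log h$ is genuinely an admissible control realizing $Q^{*}$ — this is exactly where the Aronson estimates \eqref{eq:3.1} and the regularity hypotheses (A3)--(A4) are needed, alongside the classical fact (F\"ollmer, Jamison, L\'eonard) that the $h$-transform built from \eqref{eq:3.2} is the unique minimizer of (S) (the uniqueness also being what makes every weak subsequential limit above equal to $Q^{*}$). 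A smaller technical point is the implication ``weak convergence $\Rightarrow$ $\gamma$-convergence'' used to identify the terminal marginal of the weak limits, which rests on uniform boundedness and equicontinuity of the unit ball of $\mathcal{H}$ together with the tightness supplied by the entropy bound.
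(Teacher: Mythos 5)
Your proposal is correct and shares the paper's overall architecture (the $h$-transform built from the Schr\"odinger system \eqref{eq:3.2} supplies the admissible control $u^*$ with $J_\lambda(u^*)=2H^*$ for all $\lambda$; penalization plus weak compactness plus lower semicontinuity of $H(\cdot|P)$ does the rest), but it deviates from the paper's proof at three points, two of which are genuinely different and arguably cleaner. First, for \eqref{eq:3.5} the paper runs a Luenberger-style penalty-method contradiction (assuming $\limsup_n\lambda_n\gamma_n^2=5\delta>0$ and extracting indices $m_0,m_1$ to reach $\kappa<\kappa$), and only then feeds the result into the identification of the weak limits. You instead first prove $J_{\lambda_n}(u_n)\to 2H^*$ and $H(Q^{u_n}|P)\to H^*$ — which needs only the boundedness of $\lambda_n\gamma_n^2$ to identify the terminal marginal of subsequential limits, so there is no circularity — and then read off
\[
 \lambda_n\,\gamma\bigl(\mathbb{Q}^{u_n}(X_1)^{-1},\mu_1\bigr)^2\;\le\; J_{\lambda_n}(u_n)-2H(Q^{u_n}|P)\;\longrightarrow\;0
\]
from the Girsanov inequality $\mathbb{E}_{\mathbb{Q}^{u}}\int_0^1|u|^2dt\ge 2H(Q^{u}|P)$; this is a shorter and more transparent route to \eqref{eq:3.5}, at the cost of having to state that inequality for every $u\in\mathcal{U}$ (it holds by the stopping-time/monotone-convergence argument the paper uses in Lemma \ref{lem:5.2}, plus the data-processing inequality for the pushforward by $X$). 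Second, you obtain tightness of $\{Q^{u_n}\}$ from weak compactness of the sublevel sets of $H(\cdot|P)$ on $\mathcal{P}(\mathbb{W}^d)$, whereas the paper proves tightness by hand via Billingsley's criterion, Burkholder--Davis--Gundy, and the uniform energy bound on the controlled SDE; your route is shorter but invokes a nontrivial (though standard) fact, while the paper's is self-contained. Third, for $H^*<\infty$ you disintegrate the product coupling $\mu_0\otimes\mu_1$ through the $P$-bridges and use the Aronson bounds \eqref{eq:3.1} with (A2), while the paper cites Nutz's Theorem 2.1 for the static problem; these are interchangeable. The one place where your write-up is only a sketch is exactly the place you flag: the regularity of $h$ on $[0,1)\times\mathbb{R}^d$, the martingale property of $f(X_0)h(t,X_t)$, and the admissibility $\mathbb{E}_{\mathbb{Q}^{u^*}}\int_0^1|u^*|^2dt<\infty$ — the paper's Theorem \ref{thm:5.3} carries this out with the localization $\tau_n=\inf\{t:|u_t^*|>n\}$ and the nonnegativity of relative entropy, and that argument (or an equivalent one) is indispensable for your second paragraph to close.
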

A proof of this theorem is given in Section \ref{sec:5}.

\subsection{Neural SDEs-based methods}

We shall present a numerical method based on neural networks for solving the Mckean--Vlasov control problem (C$_{\lambda}$).
Since the control class $\mathcal{U}$ is given by a subset of continuous functions on $\mathbb{R}^{1+d}$, a natural idea is to replace
$\mathcal{U}$ by a class $\{u_{\theta}\}_{\theta\in\Theta}$ of deep neural networks, and then to solve
(C$_{\lambda}$) with $\mathcal{U}$ replaced by $\{u_{\theta}\}_{\theta\in\Theta}$.
For example, $u_{\theta}$ can be given by a multilayer perceptron with input layer $g_0$, $L-1$ hidden layer $g_1,\ldots,g_{L-1}$, and output layer $g_L$,
where $L\ge 1$ and for $\xi\in\mathbb{R}^{1+m}$,
\begin{align*}
 g_0(\xi) &= \xi, \\
 g_{\ell}(\xi) &= \phi_{\ell-1}(w_{\ell}g_{\ell-1}(\xi) + \beta_{\ell})\in\mathbb{R}^{m_{\ell}}, \quad \ell = 1,\ldots, L
\end{align*}
for some matrices $w_{\ell}$ and vectors $\beta_{\ell}$, $\ell=1,\ldots,L$.
Here $m_{\ell}$ denotes the number of units in the layer $\ell$, and $\phi_{\ell-1}$ is an activation function.
Then the parameter $\theta$ is described by $\theta = (w_{\ell}, \beta_{\ell})_{\ell=1,\ldots,L}$ and
$u_{\theta}(t,x)=g_L(t,x)$, $(t,x)\in [0,1]\times\mathbb{R}^d$.
We refer to, e.g., Bishop \cite{bis:2006} and Goodfellow et al.~\cite{goo-etal:2016} for an introduction to neural networks and deep learning.

Assume here that $\{u_{\theta}\}_{\theta\in\Theta}\subset C_b^1([0,1]\times\mathbb{R}^d)$.
Then, $u_{\theta}$ satisfies \eqref{eq:3.3a} and Novikov's condition, and so \eqref{eq:3.3b} for every $\theta\in\Theta$, whence $\{u_{\theta}\}\subset\mathcal{U}$.
Moreover, since $\sigma u_{\theta}$ satisfies the linearly growth condition and is Lipschitz continuous in $x$ uniformly in $t$,
there exists a unique strong solution $\{X_t^{(\theta)}\}_{0\le t\le 1}$ of
\begin{equation}
\label{eq:3.7}
 dX_t^{(\theta)} = (b(t,X_t^{(\theta)}) + \sigma(t,X_t^{(\theta)})u_{\theta}(t,X_t^{(\theta)}))dt + \sigma(t,X_t^{(\theta)})dW_t, \quad X_0^{(\theta)}=\xi
\end{equation}
on $(\Omega,\mathcal{F},\mathbb{F},\mathbb{P})$.
Hence,
\[
 J_{\lambda}(u) = \mathbb{E}\left[\int_0^1|u_{\theta}(t,X^{(\theta)}_t)|^2dt + \lambda K_1(X^{(\theta)}_1,\tilde{X}_1^{(\theta)})\right] 
  + \lambda \int_{\mathbb{R}^d\times\mathbb{R}^d} K(x^{\prime},y^{\prime})\mu_1(dx^{\prime})\mu_1(dy^{\prime}). 
\]

Consequently, in view of Theorem \ref{thm:3.1}, a natural proxy for (S) is the following stochastic optimization problem:
minimize
\begin{equation}
\label{eq:3.8}
 F(\theta):=\frac{1}{\lambda}\mathbb{E}[u_{\theta}(\tau, X^{(\theta)}_{\tau})] + \mathbb{E}[K_1(X_1^{(\theta)},\tilde{X}_1^{(\theta)})]
\end{equation}
over $\theta\in\Theta$, with suitable $\lambda>0$.
Here $\tau$ follows the uniform distribution on $\{t_i\}_{i=0}^N$ that is independent of $X^{(\theta)}$,
where $\{t_i\}_{i=0}^N$ is a set of time discretized points such that $0=t_0<t_1<\cdots <t_N=1$.

A pseudo code of our algorithm can be described in Algorithm \ref{alg:3.1}.
\begin{algorithm}
\caption{Schr{\"o}dinger bridge with MMD}
\label{alg:3.1}
\begin{algorithmic}[1]
\Input Number $N$ of time steps, the time discretized points set $\{t_i\}_{i=0}^N$, the number $\nu$ of the iterations, the batch size $M_x$ for the spatial variable,
the batch size $M_t$ for the time variable, weight parameter $\lambda>0$
\Output A function $u(t,x)$
\State $X_{0,1},\ldots, X_{0, M_x}, \tilde{X}_{0,1},\ldots, \tilde{X}_{0,M_x}$ $\gets$ IID samples with
      distribution $\mu_0$.
\For{$k=1,2,\ldots,\nu$}
      \State $\{X_{t_i,1}^{(\theta)}\}_{i=0}^n,\ldots, \{X_{t_i,M_x}^{(\theta)}\}_{i=0}^n$, $\{\tilde{X}_{t_i,1}^{(\theta)}\}_{i=0}^n, \ldots, \{\tilde{X}_{t_i,M_x}^{(\theta)}\}_{i=0}^n$ $\gets$
       IID samples of the solution of \eqref{eq:3.7} on $\{t_i\}_{i=0}^N$.
      \State $\tau_1,\ldots,\tau_{M_t}$ $\gets$ IID samples with uniform distribution on $\{t_i\}_{i=0}^N$.
      \State $\widebar{F}(\theta)$ $\gets$ the Monte Carlo estimates of $F(\theta)$ in \eqref{eq:3.8} using $\{X_{t_i,j}^{(\theta)}\}$, $\{\tilde{X}_{t_i,j}^{(\theta)}\}$,
      and $\tau_{\ell}$,
      $j=1,\ldots,M_x$, $\ell=1,\ldots, M_t$.
      \State Take the gradient step on $\nabla_{\theta}\widebar{F}(\theta)$.
\EndFor
\end{algorithmic}
\end{algorithm}

In the case where we are given IID samples $Y_1,\ldots,Y_M$ of $\mu_1$ and use the estimator given by \eqref{eq:2.5}, the term
$\mathbb{E}[\widehat{K}_1(X_1^{(\theta)},\tilde{X}_1^{(\theta)})]$ can be replaced by
\[
 \frac{1}{M(M-1)}\sum_i\sum_{j\neq i}K(X_{1,i}^{(\theta)},X_{1,j}^{(\theta)}) - \frac{2}{M^2}\sum_{i,j}K(X_{1,i}^{(\theta)}, Y_j),
\]
where $\{X_{1,j}^{(\theta)}\}$ is an IID sequence with distribution $\mathbb{P}(X_1^{(\theta)})^{-1}$.
Thus $F(\theta)$ is modified as
\begin{equation}
\label{eq:3.10}
 F_1(\theta) = \frac{1}{\lambda}\mathbb{E}[u_{\theta}(\tau, X^{(\theta)}_{\tau})]
  + \frac{1}{M(M-1)}\sum_i\sum_{j\neq i}K(X_{1,i}^{(\theta)},X_{1,j}^{(\theta)}) - \frac{2}{M^2}\sum_{i,j}K(X_{1,i}^{(\theta)}, Y_j).
\end{equation}
Accordingly, the pseudo code of Algorithm \ref{alg:3.1} can be modified as in Algorithm \ref{alg:3.2}.
\begin{algorithm}
\caption{Schr{\"o}dinger bridge with empirical MMD}
\label{alg:3.2}
\begin{algorithmic}[1]
\Input Number $N$ of time steps, the time discretized points set $\{t_i\}_{i=0}^N$, the number $\nu$ of the iterations, the batch size $M_x$ for the spatial variable,
the batch size $M_t$ for the time variable, weight parameter $\lambda>0$
\Output A function $u(t,x)$
\State $X_{0,1},\ldots, X_{0, M_x}$ $\gets$ IID samples with
      common distribution $\mu_0$.
\For{$k=1,2,\ldots,\nu$}
      \State $\{X_{t_i,1}^{(\theta)}\}_{i=0}^n,\ldots, \{X_{t_i,M_x}^{(\theta)}\}_{i=0}^n$ $\gets$
       IID samples of the solution of \eqref{eq:3.7} on $\{t_i\}_{i=0}^N$.
      \State $\tau_1,\ldots,\tau_{M_t}$ $\gets$ IID samples with uniform distribution on $\{t_i\}_{i=0}^N$.
      \State $\widebar{F}_1(\theta)$ $\gets$ the Monte Carlo estimates of $F_1(\theta)$ in \eqref{eq:3.10} using $\{X_{t_i,j}^{(\theta)}\}$, $\{\tilde{X}_{t_i,j}^{(\theta)}\}$,
      and $\tau_{\ell}$,
      $j=1,\ldots,M_x$, $\ell=1,\ldots, M_t$.
      \State Take the gradient step on $\nabla_{\theta}\widebar{F}_1(\theta)$.
\EndFor
\end{algorithmic}
\end{algorithm}

\section{Numerical experiments}\label{sec:4}

Here we test our two algorithms through several numerical experiments.
All of numerical examples below are implemented in PyTorch on a NVIDIA Tesla P100 GPU with 256GB memory.
In particular, we use the library \texttt{torchsde} (Li \cite{li:2020}) in PyTorch for an SDE solver and the back propagation of neural SDEs.
Part of the implementation used in the numerical experiments is available at
\url{https://github.com/yumiharu-nakano/kernelSB}.

\subsection{1D Bridge: from a Dirac measure to a bimodal distribution}\label{sec:4.1}

First we examine the one-dimensional case where
$\mu_0(dx)=\delta_0(dx)$ and $\mu_1(dx)=\rho_1(x)dx$ with
\[
\rho_1(x) = \frac{1}{2\sqrt{\pi}}\left(e^{-(x+1)^2} + e^{-(x-1)^2}\right), \quad x\in\mathbb{R}.
\]
Consider a standard Brownian motion as the underlying process $\{X_t\}$, i.e., take $b=0$ and $\sigma=1$.
Since the analytical form of the target distribution is available,
we adopt Algorithm \ref{alg:3.1} with the Gaussian kernel $K(x,y)=e^{-|x-y|^2}$ and the penalty parameter $\lambda$ defined by $1/\lambda=0.005$.
The number $N$ of the time steps is $256$, and the batch sizes $M_x$ and $M_t$ for the spatial variable and time variable are both set to be $128$.
For each $\theta$, the control function $u_{\theta}(t,x)$ is described by a multi-layer perceptron with 2 hidden layer (641 parameters).
In the stochastic optimization, we choose the well-known Adam optimizer with learning rate $0.001$, and stop the algorithm after $5000$ iterations.
Figure \ref{fig:4.1} shows the time series of the histograms of the resulting state process $\{X_t^{(\theta)}\}$ with respect to
an optimized control function $u_{\theta}$, and indicates a satisfactory result.
\begin{figure}[htbp]
\centering
\includegraphics[width=1.0\columnwidth, bb = 0 0 1152 216]{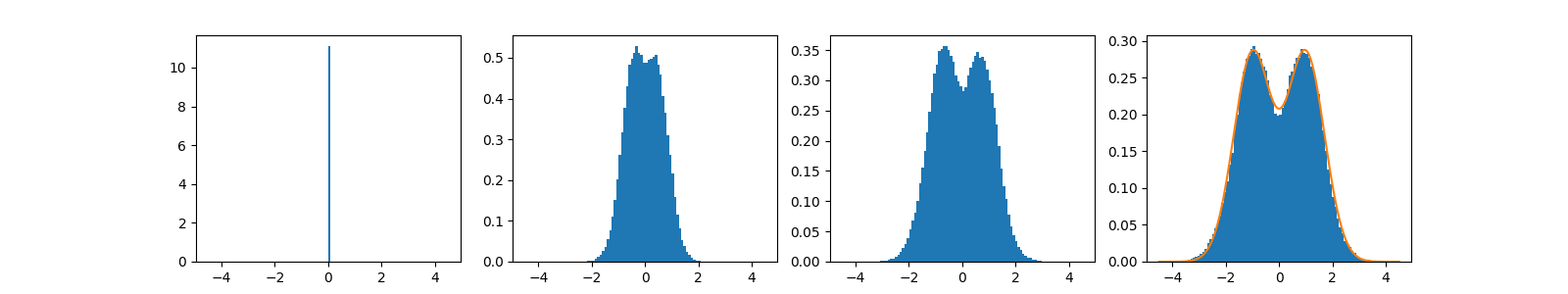}
\caption{Histograms of optimized $X_t^{(\theta)}$'s for $t=0, 0.33, 0.67, 1$ from left to right. Generated with $2\times 10^5$ samples.
The true density $\rho_1$ is plotted in orange.}
\label{fig:4.1}
\end{figure}
Figure \ref{fig:4.2} presents the resulting learning curves for $1/\lambda = 0.5, 0,05, 0.005, 0.0005$, where the other parameters remain unchanged.
Recall that $F(\theta)$ is our objective function for the stochastic optimization and the relation
$H^*=\lim_{\lambda\to\infty}J_{\lambda}^*$. Thus if $\lambda (F(\theta) + c)$ approaches to $H^*$, then we can say that the optimization gets a desirable gain,
where $c$ is the second term of the right-hand side in \eqref{eq:2.3}.
In this simple case, we can show that the optimal value $H^*$ and the constant $c$ are nearly equal to $0.09651$ and $0.72954$, respectively.
On the other hand, the minimum of the plotted losses $F(\theta)$ is around $-0.4$. This suggests that the actually obtained gain is limited and
the algorithm \ref{alg:3.1} falls into a local minimum for each $\lambda$.
It is worth noting that nevertheless the optimized $X_1^{(\theta)}$ samples $\mu$ correctly, as seen in Figure \ref{fig:4.1}.
\begin{figure}[htbp]
\centering
\includegraphics[width=0.6\columnwidth, bb = 0 0 576 324]{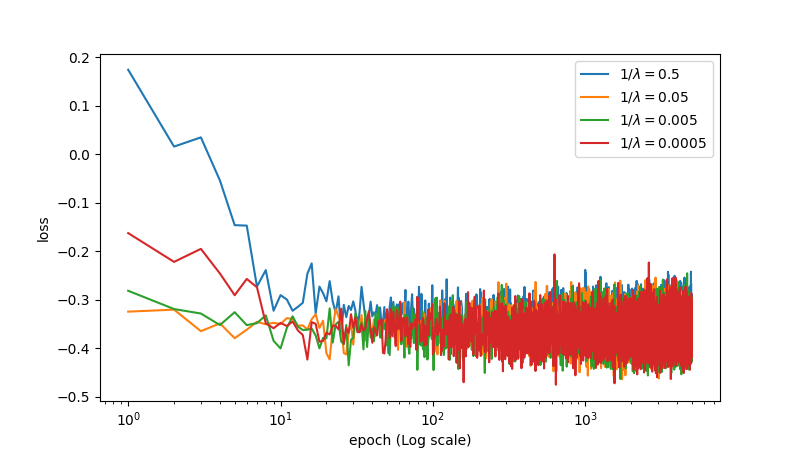}
\caption{The learning curves for $1/\lambda=0.5, 0.05, 0.005, 0.0005$. }
\label{fig:4.2}
\end{figure}

\subsection{2D Interpolation of two datasets}\label{sec:4.2}

Next we examine Algorithm \ref{alg:3.2} for the interpolation of two toy dataset distributions in $\mathbb{R}^2$, both consisting of $1000$ points.
As in the previous example, the Gaussian kernel $K(x,y)=e^{-|x-y|^2}$ and the Adam optimizer with learning rate $0.001$ are used.
Here, the control function $u_{\theta}(t,x)$ is described by a multi-layer perceptron with 3 hidden layer (11362 parameters).
We set $N=256$ and $M_t=64$.

Figure \ref{fig:4.3} shows that the resulting time series of the scatter plots of the solution of the optimized neural SDEs, in the case of
$b=0$, $\sigma=0.05I_2$, $1/\lambda=5\times 10^{-6}$, and $10000$ epochs, where no post-processing, such as kernel density estimation, is performed.
Here we have denoted by $I_2$ the two dimensional identity matrix.
We can see how the circular dataset is continuously transported to the double crescent shaped dataset.
Although a few of the transported points distribute outside of the support of the target distribution, we can say that the result is generally successful.
\begin{figure}[htbp]
\centering
\includegraphics[width=1.0\columnwidth, bb = 0 0 1152 216]{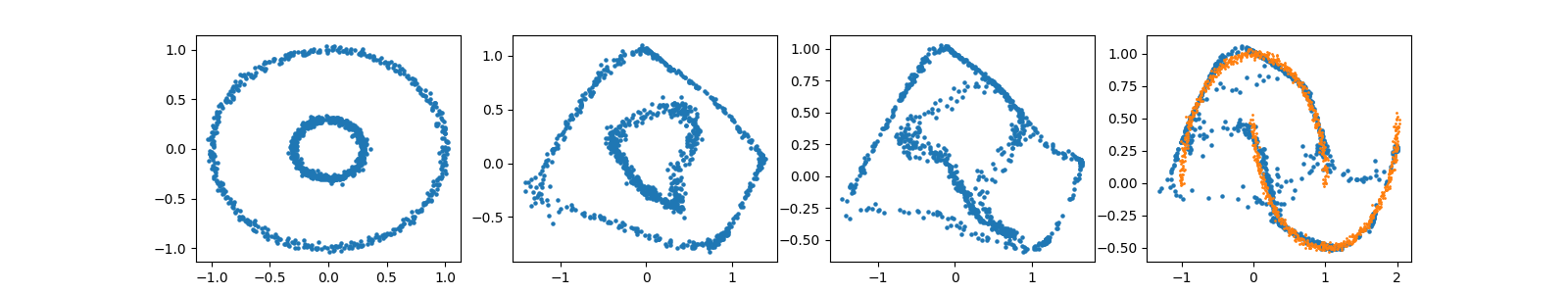}
\caption{Interpolation of a dataset distribution of two circles and one of a double crescent. The scatter plot of optimized $X_t^{(\theta)}$'s for $t=0, 0.33, 0.67, 1$
from left to right. The target distribution is plotted in orange.}
\label{fig:4.3}
\end{figure}
As in Section \ref{sec:4.1}, Figure \ref{fig:4.2} exhibits the resulting learning curves for $1/\lambda=5\times 10^{-2}, 5\times 10^{-4}, 5\times 10^{-6}, 5\times 10^{-8}$.
\begin{figure}[htbp]
\centering
\includegraphics[width=0.6\columnwidth, bb = 0 0 576 324]{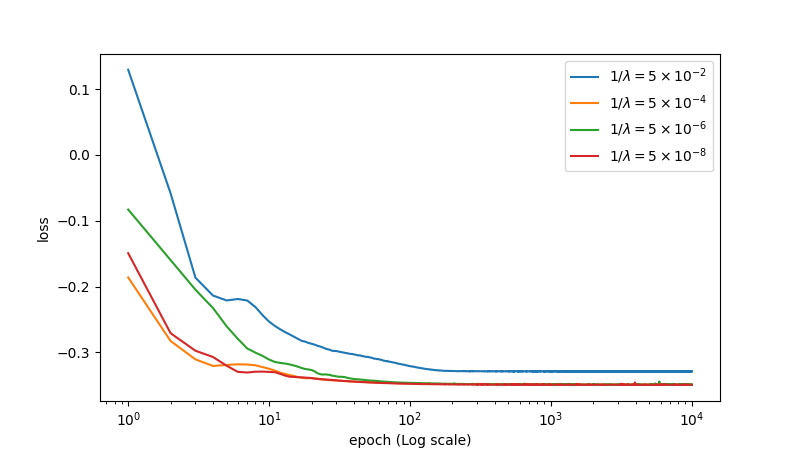}
\caption{The learning curves for $1/\lambda=5\times 10^{-2}, 5\times 10^{-4}, 5\times 10^{-6}, 5\times 10^{-8}$, in the case where
$\mu_0$ and $\mu_1$ are given by the dataset distributions of two circles and one of a double crescent, respectively. }
\label{fig:4.4}
\end{figure}
As opposed to the previous 1D example, the learning curves exhibit stable behaviors and a decreasing property with respect to $\lambda$.

Figures \ref{fig:4.5} and \ref{fig:4.6} exhibit results of a numerical test similar to the above one, in the case where two datasets have higher dispersions.
Here we set $b=0$ and $\sigma=0.1I_2$.
\begin{figure}[htbp]
\centering
\includegraphics[width=1.0\columnwidth, bb = 0 0 1152 216]{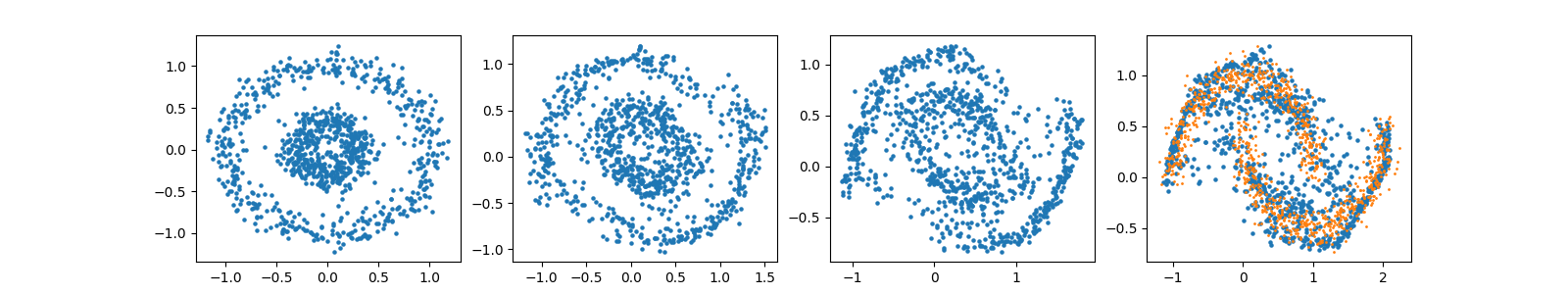}
\caption{Interpolation of two dataset distributions of two circles and one of a double crescent, with different dispersion level.
The case of $1/\lambda=5\times 10^{-3}$ and $5000$ epochs.
The scatter plot of optimized $X_t^{(\theta)}$'s for $t=0, 0.33, 0.67, 1$
from left to right. The target distribution is plotted in orange.}
\label{fig:4.5}
\end{figure}
\begin{figure}[htbp]
\centering
\includegraphics[width=0.6\columnwidth, bb = 0 0 576 324]{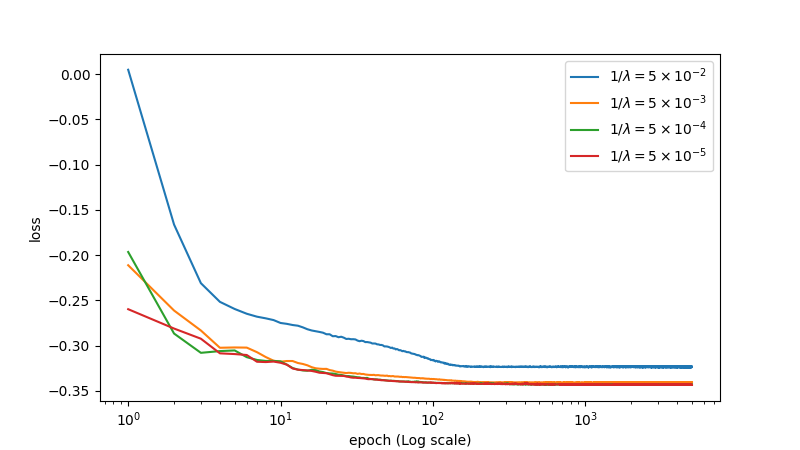}
\caption{The learning curves for $1/\lambda=5\times 10^{-2}, 5\times 10^{-3}, 5\times 10^{-4}, 5\times 10^{-5}$, in the case where
$\mu_0$ and $\mu_1$ are given by the dataset distributions presented in Figure \ref{fig:4.5}.}
\label{fig:4.6}
\end{figure}

\section{Proof of Theorem \ref{thm:3.1}}\label{sec:5}

This section is devoted to a proof of Theorem \ref{thm:3.1}. Then assume that the conditions (A1)--(A5) always hold.

First, we need to enlarge the set of control policies as follows:
\begin{defn}
We say that a quadruple $\pi = (\mathbb{Q}, B, u, Y)$ is an \textit{admissible control system} if
\begin{enumerate}[label=(\roman*)]
\item $\mathbb{Q}$ is a probability measure on $(\Omega,\mathcal{F})$ such that $\mathbb{Q}\sim \mathbb{P}$;
\item $B=\{B_t\}_{0\le t\le 1}$ is a $m$-dimensional $\mathbb{F}$-Brownian motion on $(\Omega,\mathcal{F},\mathbb{Q})$;
\item $u=\{u_t\}_{0\le t\le 1}$ is an $\mathbb{R}^m$-valued $\mathbb{F}$-progressively measurable process such that
\[
 \mathbb{E}_{\mathbb{Q}}\left[\int_0^1|u_s|^2ds\right]<\infty,
\]
\[
 \mathbb{E}_{\mathbb{Q}}\left[\exp\left(-\int_0^1u_s^{\mathsf{T}}dB_s - \frac{1}{2}\int_0^1|u_s|^2ds\right)\right]=1;
\]
\item $Y=\{Y_t\}_{0\le t\le 1}$ is an $\mathbb{R}^d$-valued continuous and $\mathbb{F}$-adapted process satisfying
\[
 Y_t= Y_0 + \int_0^t(b(s,Y_s) + \sigma(s,Y_s)u_s)ds + \int_0^1\sigma(s,Y_s) dB_s, \quad 0\le t\le 1,
\]
and $\mathbb{Q}Y_0^{-1}=\mu_0$.
\end{enumerate}
We write $\Pi$ for the set of all admissible control systems.
\end{defn}

It should be noted that $(\mathbb{Q}^u, B^u, \{u(t,X_t)\}_{0\le t\le 1}, X)\in \Pi$ for any $u\in\mathcal{U}$.
Further, denote by $\Pi_1$ the set of all $\pi=(\mathbb{Q},B,u,Y)\in\Pi$
such that $\mathbb{Q}Y_1^{-1}=\mu_1$.

For any admissible control system $\pi=(\mathbb{Q}, W, u, Y)\in\Pi_1$,
define the control criterion by
\[
 J(\pi):=\mathbb{E}_{\mathbb{Q}}\left[\int_0^1|u_s|^2ds\right].
\]
Then consider the optimal stochastic control problem
\begin{equation}
\tag{C}
 J^*:=\inf_{\pi\in\Pi_1} J(\pi).
\end{equation}

The next lemma describes a connection between the problems (S) and (C), which is a generalization of Lemma 2.6 in \cite{fol:1985}.
\begin{lem}
\label{lem:5.2}
Let $Q\in\mathcal{P}(\mathbb{W}^d)$ satisfies $Q_0=\mu_0$, $Q_1=\mu_1$, and $H(Q|P)<\infty$.
Then there exists $\pi\in\Pi_1$ such that
\begin{equation}
\label{eq:5.3}
 H(Q|P)= \frac{1}{2}J(\pi).
\end{equation}
\end{lem}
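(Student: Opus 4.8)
The plan is to realise $Q$, transferred to the filtered space of Section~\ref{sec:3} (replace $\mathbb P$ by the measure $\mathbb Q$ with $d\mathbb Q/d\mathbb P=(dQ/dP)(X)$, so that $\mathbb QX^{-1}=Q$), as the law of a controlled diffusion, and to read off the control $u$ and its energy from the density of $\mathbb Q$ with respect to $\mathbb P$. Since $H(Q|P)<\infty$ we have $\mathbb Q\ll\mathbb P$; let $M_t:=\mathbb E_{\mathbb P}[\tfrac{d\mathbb Q}{d\mathbb P}\mid\mathcal F_t]$ be the density martingale. It is a nonnegative uniformly integrable $\mathbb P$-martingale with $M_1=d\mathbb Q/d\mathbb P$, and from $\mathbb Q(M_1=0)=\mathbb E_{\mathbb P}[M_1\mathbf 1_{\{M_1=0\}}]=0$ together with the martingale property one gets $M_t>0$ for every $t\in[0,1]$, $\mathbb Q$-a.s.

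First I would apply the martingale representation theorem — available because under (A3)--(A4) the augmented filtration $\mathbb F$ is generated by $W$ — to write $M_t=1+\int_0^t\psi_s^{\mathsf T}\,dW_s$ with $\int_0^1|\psi_s|^2\,ds<\infty$ $\mathbb P$-a.s. Putting $u_s:=\psi_s/M_s$ on $\{M_s>0\}$ and $u_s:=0$ otherwise gives, $\mathbb Q$-a.s., $M_t=\mathcal E\!\big(\int_0^\cdot u_s^{\mathsf T}\,dW_s\big)_t$, i.e.\ $\log M_t=\int_0^t u_s^{\mathsf T}\,dW_s-\tfrac12\int_0^t|u_s|^2\,ds$. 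By the Girsanov--Maruyama theorem $B_t:=W_t-\int_0^t u_s\,ds$ is an $\mathbb F$-Brownian motion under $\mathbb Q$, and substituting $dW_t=dB_t+u_t\,dt$ in \eqref{eq:3.0} shows that under $\mathbb Q$ the process $X$ solves $dX_t=(b(t,X_t)+\sigma(t,X_t)u_t)\,dt+\sigma(t,X_t)\,dB_t$ with $\mathbb QX_0^{-1}=\mu_0$. Taking $Y:=X$, the hypotheses $Q_0=\mu_0$, $Q_1=\mu_1$ become $\mathbb QY_0^{-1}=\mu_0$, $\mathbb QY_1^{-1}=\mu_1$, so that $\pi:=(\mathbb Q,B,u,X)$ will be the required element of $\Pi_1$ once the two integrability conditions in the definition of an admissible control system are verified; both fall out of the next step.

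The heart of the proof is the identity
\[
 H(Q|P)=\tfrac12\,\mathbb E_{\mathbb Q}\!\left[\int_0^1|u_s|^2\,ds\right],
\]
which yields simultaneously $\mathbb E_{\mathbb Q}[\int_0^1|u_s|^2\,ds]<\infty$ and $\tfrac12J(\pi)=H(Q|P)$, that is, \eqref{eq:5.3}. I would prove it by localisation: pick stopping times $S_n\uparrow$ that keep $M$ bounded above, keep $M$ bounded away from $0$, and bound $\int_0^\cdot|u_s|^2\,ds$; then $\int_0^{\cdot\wedge S_n}u_s^{\mathsf T}\,dB_s$ is a genuine $\mathbb Q$-martingale, so rewriting the representation with $B$, namely $\log M_{t\wedge S_n}=\int_0^{t\wedge S_n}u_s^{\mathsf T}\,dB_s+\tfrac12\int_0^{t\wedge S_n}|u_s|^2\,ds$, and taking $\mathbb E_{\mathbb Q}$ gives $\mathbb E_{\mathbb Q}[\log M_{1\wedge S_n}]=\tfrac12\,\mathbb E_{\mathbb Q}[\int_0^{1\wedge S_n}|u_s|^2\,ds]$. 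Now $t\mapsto\mathbb E_{\mathbb Q}[\log M_t]$ is nondecreasing (it is the relative entropy of $\mathbb Q$ with respect to $\mathbb P$ on $\mathcal F_t$) and bounded above by $H(Q|P)<\infty$, so the left-hand sides are bounded uniformly in $n$; monotone convergence on the right then forces $\mathbb E_{\mathbb Q}[\int_0^1|u_s|^2\,ds]<\infty$, and once this is known $\int_0^\cdot u_s^{\mathsf T}\,dB_s$ is a true $\mathbb Q$-martingale on $[0,1]$, so letting $n\to\infty$ gives $H(Q|P)=\mathbb E_{\mathbb Q}[\log M_1]=\tfrac12\,\mathbb E_{\mathbb Q}[\int_0^1|u_s|^2\,ds]$. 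The first admissibility condition is now in hand; for the second, note that, $\mathbb Q$-a.s., $\exp(-\int_0^1u_s^{\mathsf T}\,dB_s-\tfrac12\int_0^1|u_s|^2\,ds)=1/M_1$, whose $\mathbb Q$-expectation equals $\mathbb P(M_1>0)$, leaving one to confirm the equivalence $\mathbb Q\sim\mathbb P$ of condition~(i), which I would establish as in Lemma~2.6 of \cite{fol:1985} (see also \cite{leo:2013}). This completes the construction: $\pi\in\Pi_1$ and $\tfrac12J(\pi)=H(Q|P)$.

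The step I expect to be the main obstacle is precisely this entropy--energy identity: a priori neither $\int_0^1|u_s|^2\,ds$ nor the stochastic integral $\int_0^1u_s^{\mathsf T}\,dB_s$ is $\mathbb Q$-integrable, so the localisation, the use of the monotonicity and finiteness of the restricted entropies, and the passage to the limit must be carried out with care. By contrast the martingale representation, the Girsanov change of measure, the identification of the SDE, and the transfer of $Q$ from $\mathbb W^d$ to $(\Omega,\mathcal F,\mathbb F)$ are routine.
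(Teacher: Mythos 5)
Your proposal follows essentially the same route as the paper's proof: transfer $Q$ to a measure $\mathbb{Q}$ on $(\Omega,\mathcal{F})$ via $d\mathbb{Q}/d\mathbb{P}=(dQ/dP)(X)$, represent the density martingale as a stochastic integral, set $u=\psi/M$ on $\{M>0\}$, apply the (generalized) Girsanov--Maruyama theorem, and establish the entropy--energy identity $H(Q|P)=\tfrac12\mathbb{E}_{\mathbb{Q}}\int_0^1|u_s|^2ds$ by localization together with the monotonicity and finiteness of the restricted relative entropies. The one slip is the claim that the martingale representation holds because $\mathbb{F}$ is generated by $W$ --- it cannot be, since the nondegenerate $\xi\sim\mu_0$ is $\mathcal{F}_0$-measurable --- so the representation must be taken, as the paper does, with respect to the natural filtration of $W$ (or of $(\xi,W)$, for which $M_0=1$ still holds because $Q_0=P_0=\mu_0$); this does not affect the rest of the argument.
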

\begin{proof}
Let $Q\in\mathcal{P}(\mathbb{W}^d)$ such that $Q_0=\mu_0$, $Q_1=\mu_1$, and $H(Q|P)<\infty$.
Define the probability measure $\mathbb{Q}$ on $(\Omega,\mathcal{F})$ by
\[
 \frac{d\mathbb{Q}}{d\mathbb{P}} = \frac{dQ}{dP}(X).
\]
Let $\{\mathcal{G}_t\}$ be the natural filtration generated by $W$, augmented with $\mathbb{P}$.
Then by the martingale representation theorem,
\[
 \kappa_t:= \mathbb{E}\left[\left.\frac{d\mathbb{Q}}{d\mathbb{P}}\right| \mathcal{G}_t\right]
 = 1 + \int_0^t\phi_s^{\mathsf{T}}dW_s, \quad \mathbb{P}\text{-a.s.}, \;\; 0\le t\le 1,
\]
for some $\mathbb{R}^d$-valued and $\{\mathcal{G}_t\}$-progressively measurable process $\phi$ satisfying
\[
 \int_0^1|\phi_t|^2dt < \infty, \quad \mathbb{P}\text{-a.s.}
\]
Put $u_t:=\kappa_t^{-1}1_{\{\kappa_t>0\}}\phi_t$, $0\le t\le 1$.
Since the process $\kappa_t$ is a nonnegative
$(\{\mathcal{G}_t\},\mathbb{P})$-supermartingale, we apply a generalized Girsanov-Maruyama theorem
(see Theorem 6.2 in Liptser and Shiryaev \cite{lip-shi:2001}) to find that the process
\[
 B_t: = W_t - \int_0^t u_sds, \quad 0\le t\le 1,
\]
is an $(\{\mathcal{G}_t\}, \mathbb{Q})$-Brownian motion. By the same argument as in Section 6.3 in \cite{lip-shi:2001},
we obtain
\begin{equation}
\label{eq:5.4}
 \int_0^1|u_t|^2dt < \infty, \quad \mathbb{Q}\text{-a.s.}
\end{equation}
and
\begin{equation}
\label{eq:5.5.1}
 \kappa_t = \exp\left(\int_0^tu_s^{\mathsf{T}}dB_s + \frac{1}{2}\int_0^t|u_s|^2ds\right), \quad \mathbb{Q}\text{-a.s.}, \;\; 0\le t\le 1.
\end{equation}
For the reader's convenience, we shall describe detailed proofs of \eqref{eq:5.4} and \eqref{eq:5.5.1}. To this end, observe
\[
 \mathbb{P}\left(\int_0^1|\kappa_tu_t|^2dt<\infty\right) = \mathbb{P}\left(\int_0^1|\phi_t|^21_{\{\kappa_t>0\}}dt<\infty\right)
 \ge \mathbb{P}\left(\int_0^1|\phi_t|^2 dt<\infty\right) = 1.
\]
Further, use Lemma 6.5 in \cite{lip-shi:2001} to get $\mathbb{Q}(\inf_{0\le t\le 1}\kappa_t=0)=0$.
Since $\kappa_t$ is continuous $\mathbb{Q}$-a.s., we get \eqref{eq:5.4}.
Hence  the stochastic integrals
$\int_0^t\kappa_su_s^{\mathsf{T}}dW_s$, $\int_0^t \kappa_su_s^{\mathsf{T}}dB_s$, and
$\int_0^tu_s^{\mathsf{T}}dB_s$ are well-defined. From this,
\[
 \kappa_t = 1 + \int_0^t\kappa_s u_s^{\mathsf{T}} dW_s = 1 + \int_0^t\kappa_su_s^{\mathsf{T}}dB_s + \int_0^t\kappa_s|u_s|^2ds,
 \quad \mathbb{Q}\text{-a.s.},
\]
whence by It{\^o} formula, \eqref{eq:5.5.1} holds. Moreover, as in the proof of Lemma 2.6 in F{\"o}llmer \cite{fol:1985}, we
can show that
\begin{equation}
\label{eq:5.6.1}
 \mathbb{E}_{\mathbb{Q}}\int_0^1 |u_t|^2 dt < \infty.
\end{equation}
Indeed, put $\tau_n=\inf\{t>0: \int_0^t|u_s|^2ds > n\}\wedge 1$, and
\[
 Z_n = \exp\left(\int_0^{\tau_n}u_s^{\mathsf{T}}dB_s + \frac{1}{2}\int_0^{\tau_n}|u_s|^2ds\right).
\]
We can define the probability measure $\mathbb{Q}_n$ on $(\Omega,\mathcal{G}_1)$ by
$d\mathbb{Q}_n/d\mathbb{P}=Z_n$. Then,
\[
 \mathbb{E}_{\mathbb{Q}}\left[\log \frac{d\mathbb{Q}}{d\mathbb{P}}\right] =
 \mathbb{E}_{\mathbb{Q}}\left[\log \frac{d\mathbb{Q}}{d\mathbb{Q}_n}\right] + \mathbb{E}_{\mathbb{Q}}[\log Z_n] \\
 \ge \mathbb{E}_{\mathbb{Q}}[\log Z_n] = \frac{1}{2}\mathbb{E}_{\mathbb{Q}}\int_0^{\tau_n}|u_s|^2ds.
\]
Use the monotone convergence theorem to obtain
\[
 \mathbb{E}_{\mathbb{Q}}\int_0^1|u_s|^2ds = \lim_{n\to\infty}\mathbb{E}_{\mathbb{Q}}\int_0^{\tau_n}|u_s|^2ds \le
 2\mathbb{E}_{\mathbb{Q}}\left[\log \frac{d\mathbb{Q}}{d\mathbb{P}}\right]= 2H(Q|P).
\]
Thus \eqref{eq:5.6.1} follows.
On the other hand, using Jensen's inequality for the conditional expectation, we obtain
\[
 H(Q|P) \le \mathbb{E}_{\mathbb{Q}}\left[\log\kappa_1\right]
  = \frac{1}{2}\mathbb{E}_{\mathbb{Q}}\int_0^1|u_t|^2dt.
\]
Hence
\[
 H(Q|P) = \frac{1}{2}\mathbb{E}_{\mathbb{Q}}\int_0^1|u_t|^2dt.
\]
Since $W_t$ is an It{\^o} process under $\mathbb{Q}$, the representation
\[
 X_t = X_0 + \int_0^t (b(s,X_s) + \sigma(s,X_s)u_s)ds+ \int_0^t \sigma(s,X_s)dB_s, \quad \mathbb{Q}\text{-a.s.}
\]
holds. Moreover, we have
\[
 \mathbb{E}_{\mathbb{Q}}\left[\exp\left(-\int_0^1u_t^{\mathsf{T}}dB_t-\frac{1}{2}\int_0^1|u_t|^2dt\right)\right]
 =\mathbb{E}\left[\kappa_1\exp\left(-\int_0^1u_t^{\mathsf{T}}dB_t-\frac{1}{2}\int_0^1|u_t|^2dt\right)\right]=1,
\]
whence $(\mathbb{Q},B,u,X)\in\Pi_1$ and \eqref{eq:5.3} hold.
\end{proof}

By Lemma \ref{lem:5.2}, we have
\[
 H^*\ge \frac{1}{2}J^*.
\]
By (A2) and $\mu_1^*\sim\mu_1$, the density $\varphi^*_1(y):=d\mu_1^*/dy$ is positive. 
So it follows from Theorem 2 in \cite{jam:1975} that the function 
\[
 h(t,x) = \mathbb{E}_{\mathbb{P}}[\varphi_1^*(X_1^{t,x})], \quad 0\le t\le 1,\;\; x\in\mathbb{R}^d,
\]
is in $C^{1,2}([0,1)\times\mathbb{R}^d)$ and satisfies
\begin{align*}
 \frac{\partial}{\partial t}  h(t,y) + b(t,x)^{\mathsf{T}}D_xh(t,x) + \frac{1}{2}\mathrm{tr}(\sigma(t,x)\sigma(t,x)^{\mathsf{T}}D_x^2h(t,x)) &= 0, \quad 0\le t< 1,\;\; x\in\mathbb{R}^d, \\
 h(1,x) &= \varphi_1^*(x), \quad x\in\mathbb{R}^d,
\end{align*}
where $\{X_s^{t,x}\}$ is the unique solution of \eqref{eq:3.0} with initial condition replaced by $(t,x)$, i,e.,
\[
 X_s^{t,x} = x + \int_t^sb(r,X_r^{t,x})dr + \int_t^s \sigma(r,X_r^{t,x})dW_r, \quad t\le s\le 1,
\]
and $D_xh$ and $D_x^2h$ stand for the gradient and Hessian of $h$ with respect to $x$, respectively.
Then, by \eqref{eq:3.2},
\[
 h(0,x)=\int_{\mathbb{R}^d}p(0,x,1,y)\mu_1^*(dy)= \frac{d\mu_0}{d\mu_0^*}(x), \quad x\in\mathbb{R}^d,
\]
and so
\[
 \mathbb{E}\left[\frac{h(1,X_1)}{h(0,X_0)}\right]=\int_{\mathbb{R}^d}\frac{1}{h(0,x)}\int_{\mathbb{R}^d}p(0,x,1,y)\mu_1^*(dy)\mu_0(dx)=1.
\]
From this
we can define the probability measure $\mathbb{P}^*$ by
\[
 \frac{d\mathbb{P}^*}{d\mathbb{P}}= \frac{h(1,X_1)}{h(0,X_0)}.
\]
Then consider the function $u^*(t,x):=\sigma(t,x)^{\mathsf{T}}D_x\log h(t,x)$, $(t,x)\in [0,1]\times\mathbb{R}^d$ and 
the process $W^*_t:=W_t - \int_0^tu^*_sds$, $0\le t\le 1$. 
Further, $P^*:=\mathbb{P}^*X^{-1}$ and $\pi^*:=(\mathbb{P}^*,W^*,\{u^*(t,X_t)\}_{0\le t\le 1},X)$. 
\begin{thm}
\label{thm:5.3}
We have $u^*\in \mathcal{U}$, $\mathbb{Q}^{u^*} = P^*$, and $B^{u^*} = W^*$. Moreover, $\pi^*\in \Pi_1$  
and is optimal for the problem (C). Furthermore, it holds that
\[
   H(P^*|P)=\tfrac12 J(\pi^*) =
    \int_{\mathbb{R}^d}\log h(1,y) \mu_1(dy)
    - \int_{\mathbb{R}^d}\log h(0,x) \mu_0(dx) < \infty.
\]
In particular, $P^*$ is the unique solution of the problem (S).
\end{thm}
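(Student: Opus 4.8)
\textit{Plan.} The whole argument hinges on the fact that $M_t:=h(t,X_t)=\mathbb E_{\mathbb P}[\varphi_1^*(X_1)\mid\mathcal F_t]$ is a strictly positive closed $\mathbb P$-martingale. First I would apply It\^o's formula to $\log M_t$ and use the backward equation for $h$ to cancel the drift, obtaining the $h$-transform identity
\[
 \frac{h(t,X_t)}{h(0,X_0)}=\exp\Big(\int_0^t u^*(s,X_s)^{\mathsf T}\,dW_s-\tfrac12\int_0^t|u^*(s,X_s)|^2\,ds\Big),\qquad 0\le t\le 1 .
\]
Since $\mathbb E_{\mathbb P}[h(1,X_1)/h(0,X_0)]=1$ has already been shown, the right-hand side at $t=1$ is a genuine probability density, which is exactly condition \eqref{eq:3.3b}; condition \eqref{eq:3.3a} holds because the quadratic variation $\int_0^1 h(s,X_s)^2|u^*(s,X_s)|^2\,ds$ of $M$ is $\mathbb P$-a.s.\ finite and $\min_{0\le t\le 1}h(t,X_t)>0$ $\mathbb P$-a.s.\ (a nonnegative supermartingale ending at the positive value $\varphi_1^*(X_1)$ never reaches $0$); and $u^*=\sigma^{\mathsf T}D_x\log h$ is continuous on $[0,1)\times\mathbb R^d$ since $h\in C^{1,2}$ and $h>0$. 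Hence $u^*\in\mathcal U$, and then by the very definitions of $\mathbb Q^{u}$ and $B^{u}$ one reads off $d\mathbb Q^{u^*}/d\mathbb P=h(1,X_1)/h(0,X_0)=d\mathbb P^*/d\mathbb P$ and $B^{u^*}=W^*$; in particular $\pi^*\in\Pi$.

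\textit{The terminal constraint and the entropy formula.} To get $\pi^*\in\Pi_1$ I would compute, for bounded measurable $f$, using $\mathbb E_{\mathbb P}[\,\cdot\,]=\int\!\!\int(\cdot)\,p(0,x,1,y)\,\mu_0(dx)\,dy$, the relation $h(0,x)=d\mu_0/d\mu_0^*(x)$ (so $\mu_0(dx)/h(0,x)=\mu_0^*(dx)$), and $h(1,y)=\varphi_1^*(y)=d\mu_1^*/dy$,
\[
 \mathbb E_{\mathbb P^*}[f(X_1)]=\int f(y)\,\varphi_1^*(y)\Big(\int p(0,x,1,y)\,\mu_0^*(dx)\Big)dy=\int f(y)\,\mu_1(dy),
\]
the last equality being precisely the second line of the Schr\"odinger system \eqref{eq:3.2}; thus $\mathbb P^*(X_1)^{-1}=\mu_1$. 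Moreover $\mathbb E_{\mathbb P}[d\mathbb P^*/d\mathbb P\mid\mathcal F_0]=h(0,X_0)/h(0,X_0)=1$, so $\mathbb P^*=\mathbb P$ on $\mathcal F_0$, $X_0\sim\mu_0$ under $\mathbb P^*$, and $P^*=\mathbb P^*X^{-1}$ is admissible for (S). Since $d\mathbb P^*/d\mathbb P$ is $\sigma(X)$-measurable, $H(P^*|P)=\mathbb E_{\mathbb P^*}[\log h(1,X_1)-\log h(0,X_0)]$, and once each term is $\mu$-integrable this equals $\int\log h(1,y)\,\mu_1(dy)-\int\log h(0,x)\,\mu_0(dx)$ by the two marginal facts. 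The needed integrability, $\log\varphi_1^*\in L^1(\mu_1)$ and $\log h(0,\cdot)\in L^1(\mu_0)$, is the integrability of the Schr\"odinger potentials, which I would derive from Aronson's two-sided Gaussian bounds \eqref{eq:3.1} together with the moment and entropy conditions in (A2) (cf.\ \cite{jam:1975}, \cite{nut:2022}); granting it, $H(P^*|P)<\infty$ with the asserted value.

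\textit{$\tfrac12 J(\pi^*)=H(P^*|P)$ and uniqueness for (S).} Substituting $dW_s=dW^*_s+u^*(s,X_s)\,ds$ into the $h$-transform identity gives $\log(d\mathbb P^*/d\mathbb P)=\int_0^1 u^*(s,X_s)^{\mathsf T}dW^*_s+\tfrac12\int_0^1|u^*(s,X_s)|^2ds$; since $W^*$ is a $\mathbb P^*$-Brownian motion, the stopping-time plus monotone-convergence argument used in the proof of Lemma~\ref{lem:5.2} yields $\mathbb E_{\mathbb P^*}\int_0^1|u^*(s,X_s)|^2ds<\infty$ and a vanishing It\^o-integral mean, hence $\tfrac12 J(\pi^*)=H(P^*|P)$. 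Next, from $h(0,x)=\rho_0(x)/\varphi_0^*(x)$ the path density factorises, $dP^*/dP(w)=a(w(0))\,b(w(1))$ with $a=\varphi_0^*/\rho_0$ and $b=\varphi_1^*$, and $\log a\in L^1(\mu_0)$, $\log b\in L^1(\mu_1)$ by the integrability just discussed. For any $Q$ admissible for (S) with $H(Q|P)<\infty$, $\log(dP^*/dP)$ is then $Q$-integrable with $\mathbb E_Q[\log(dP^*/dP)]=\int\log a\,d\mu_0+\int\log b\,d\mu_1$, a number that equals $H(P^*|P)$ (apply this with $Q=P^*$); hence $H(Q|P^*)=H(Q|P)-H(P^*|P)\ge0$, with equality iff $Q=P^*$. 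Thus $P^*$ is the unique minimiser of (S) and $H^*=H(P^*|P)$.

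\textit{Optimality for (C), and the main obstacle.} For any $\pi=(\mathbb Q,B,u,Y)\in\Pi_1$ with $J(\pi)<\infty$, set $d\mathbb Q^{\circ}/d\mathbb Q=\exp(-\int_0^1 u_s^{\mathsf T}dB_s-\tfrac12\int_0^1|u_s|^2ds)$, a probability measure by the definition of $\Pi$; by Girsanov the process $B_t+\int_0^t u_s\,ds$, $0\le t\le 1$, is a $\mathbb Q^{\circ}$-Brownian motion, so $Y$ solves \eqref{eq:3.0} under $\mathbb Q^{\circ}$ with initial law $\mu_0$, whence $\mathbb Q^{\circ}Y^{-1}=P$ by uniqueness in law. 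Using the data-processing inequality for relative entropy and $\mathbb E_{\mathbb Q}\int_0^1|u_s|^2ds<\infty$,
\[
 H(\mathbb QY^{-1}\mid P)=H(\mathbb QY^{-1}\mid\mathbb Q^{\circ}Y^{-1})\le H(\mathbb Q\mid\mathbb Q^{\circ})=\tfrac12\,\mathbb E_{\mathbb Q}\!\int_0^1|u_s|^2\,ds=\tfrac12 J(\pi),
\]
and since $\mathbb QY^{-1}$ is admissible for (S) this gives $\tfrac12 J(\pi)\ge H^*=H(P^*|P)=\tfrac12 J(\pi^*)$, so $J(\pi^*)=J^*$ and $\pi^*$ is optimal. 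The one genuinely delicate point is the integrability of the Schr\"odinger potentials (equivalently $H(P^*|P)<\infty$): it is the only place where the quantitative content of (A2) and of Aronson's bounds \eqref{eq:3.1} is really used, and it feeds both the closed form for $H(P^*|P)$ and the factorisation argument for uniqueness; everything else is Girsanov bookkeeping, the closed-martingale property of $h(\cdot,X_\cdot)$, and the algebra of the Schr\"odinger system.
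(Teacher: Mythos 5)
Your proposal is correct in substance and shares the paper's backbone (the $h$-transform identity, the Schr\"odinger system for the marginals, the stopping-time/monotone-convergence argument for $\mathbb{E}_{\mathbb{P}^*}\int_0^1|u_t^*|^2dt<\infty$, and the appeal to the static entropic-transport result of Nutz/Jamison for the finiteness of $\int\log h(1,y)\,\mu_1(dy)-\int\log h(0,x)\,\mu_0(dx)$ --- which you rightly flag as the only genuinely quantitative input). Where you diverge is in the last two claims. For optimality of $\pi^*$ in (C), the paper runs a direct verification: for an arbitrary $\pi=(\mathbb{Q},B,u,Y)\in\Pi_1$ it writes $1=\mathbb{E}_{\mathbb{Q}}[\exp\{\log h(1,Y_1)-\log h(0,Y_0)-\int_0^1u^{\mathsf T}dB-\tfrac12\int_0^1|u|^2dt\}]$ and applies Jensen's inequality to get $\tfrac12\mathbb{E}_{\mathbb{Q}}\int_0^1|u|^2dt\ge\int\log h(1,y)\mu_1(dy)-\int\log h(0,x)\mu_0(dx)=\tfrac12 J(\pi^*)$, and then deduces that $P^*$ solves (S) by combining this with Lemma \ref{lem:5.2}, with uniqueness from strict convexity of $H(\cdot|P)$. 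You instead first establish optimality and uniqueness for (S) via the factorisation $dP^*/dP(w)=a(w(0))b(w(1))$ and the additive decomposition $H(Q|P)=H(Q|P^*)+H(P^*|P)$ over the constraint set, and then get optimality for (C) from the data-processing inequality $H(\mathbb{Q}Y^{-1}|P)\le H(\mathbb{Q}|\mathbb{Q}^{\circ})=\tfrac12 J(\pi)$. Both routes are sound; yours makes the ``$\tfrac12 J^*\ge H^*$'' half of the (S)--(C) correspondence explicit and exhibits the uniqueness mechanism more transparently (equality iff $H(Q|P^*)=0$), at the cost of needing the $Q$-integrability of $\log(dP^*/dP)$ for every finite-entropy competitor, which you correctly reduce to $\log a\in L^1(\mu_0)$, $\log b\in L^1(\mu_1)$; the paper's Jensen argument avoids that discussion entirely by never leaving the control formulation. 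One small caveat: your $M_t=h(t,X_t)$ need not be a \emph{closed} martingale (integrability of $\varphi_1^*(X_1)$ under $\mathbb{P}$ is not obvious); the paper works with the normalised ratio $h(t,X_t)/h(0,X_0)$, and you should do the same --- this does not affect the positivity-of-the-infimum argument you use for \eqref{eq:3.3a}.
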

\begin{proof}
Step (i).
First we will prove that $u^*\in\mathcal{U}$, $\pi^*\in\Pi_1$, and $H(P^*|P)<\infty$.
Since the process $\mathbb{E}[h(1,X_1)/h(0,X_0)|\mathcal{F}_t]$ is a $\mathbb{P}$-martingale with unit initial value, we have
\[
 \mathbb{P}^*(X_0\in A) = \mathbb{E}\left[\left.\mathbb{E}\left[\frac{h(1,X_1)}{h(0,X_0)}\,\right|\,\mathcal{F}_0\right]1_{\{X_0\in A\}}\right] =
 \mathbb{P}(X_0\in A) = \mu_0(A),
\]
and also by \eqref{eq:3.2},  
\begin{align*}
    \mathbb{P}^*(X_1\in A)
    &= \mathbb{E}_{\mathbb{P}}
       \!\left[
         1_{\{X_1\in A\}}\,
         \frac{h(1,X_1)}{h(0,X_0)}
       \right]
    = \int_{\mathbb{R}^d} \frac{d\mu^*_0}{d\mu_0}(x)
      \int_A \frac{d\mu^*_1}{dy}(y)p(0,x,1,y)dy\mu_0(dx) \\
    &= \int_A \int_{\mathbb{R}^d} \frac{d\mu^*_0}{d\mu_0}(x)
       p(0,x,1,y)\mu_0(dx)\,d\mu^*_1(y)\\
    &= \int_A \int_{\mathbb{R}^d} p(0,x,1,y)d\mu^*_0(x)d\mu^*_1(y)
     = \mu_1(A).
  \end{align*}
for any $A\in\mathcal{B}(\mathbb{R}^d)$. 

Put 
\[
\nu(E) = \int_E p(0,x,1,y)\mu_0^*(dx)\mu_1^*(dy), \quad E\in\mathcal{B}(\mathbb{R}^d\times\mathbb{R}^d). 
\]
Then, it is known that under (A2), 
\begin{align*}
\int_{\mathbb{R}^d}\log h(1,y)\mu_1(dy) - \int_{\mathbb{R}^d}\log h(0,x)\mu_0(dx) 
&= H(\nu\,|\, p(0,x,1,y)\mu_0(dx)dy) \\ 
&= \inf H(\nu^{\prime}\,|\, p(0,x,1,y)\mu_0(dx)dy) < \infty, 
\end{align*}
where the infimum is taken over all $\nu\in\mathcal{P}(\mathbb{R}^d\times\mathbb{R}^d)$ such that 
$\nu^{\prime}(dx\times\mathbb{R}^d)=\mu_0(dx)$ and $\nu^{\prime}(\mathbb{R}^d\times dy)=\mu_1(dy)$. 
See, e.g., Theorem 2.1 in Nutz \cite{nut:2022}. 

Note that
\[
 dh(t,X_t) = h(t,X_t)(u^*_t)^{\mathsf{T}}dW_t,
\]
where $u_t^*=u^*(t,X_t)$, and consider the stopping times
$\tau_n:=\inf\{t>0; |u_t^*|>n\}$, $n\in\mathbb{N}$.
Then, for each $n$ define the probability measure $\mathbb{P}_n$ by
\begin{align*}
 \frac{d\mathbb{P}_n}{d\mathbb{P}} :&= \frac{h(1\wedge\tau_n,X_{1\wedge\tau_n})}{h(0,X_0)}
 = \exp\left[\int_0^1(\psi^{(n)}_t)^{\mathsf{T}}dW_t - \frac{1}{2}\int_0^1|\psi_t^{(n)}|^2dt\right] \\
 &= \exp\left[\int_0^1(\psi^{(n)}_t)^{\mathsf{T}}dW^*_t
 +\frac{1}{2}\int_0^1|\psi_t^{(n)}|^2dt\right],
\end{align*}
where $\psi_t^{(n)} = u_t^*1_{\{t\le\tau_n\}}$. By the monotone convergence theorem,
\begin{equation}
\label{eq:5.5}
\mathbb{E}_{\mathbb{P}^*}\int_0^1|u_t^*|^2dt
= \lim_{n\to\infty}\mathbb{E}_{\mathbb{P}^*}\left[\int_0^1|\psi_t^{(n)}|^2dt\right]
= 2\lim_{n\to\infty}\mathbb{E}_{\mathbb{P}^*}\left[\log\frac{d\mathbb{P}_n}{d\mathbb{P}}\right].
\end{equation}
On the other hand, since the relative entropy is nonnegative, we obtain
\begin{align*}
\mathbb{E}_{\mathbb{P}^*}\left[\log\frac{d\mathbb{P}_n}{d\mathbb{P}}\right]
&\le \mathbb{E}_{\mathbb{P}^*}\left[\log\frac{d\mathbb{P}_n}{d\mathbb{P}}\right]
    + \mathbb{E}_{\mathbb{P}^*}\left[\log\frac{d\mathbb{P}^*}{d\mathbb{P}_n}\right]
  = \mathbb{E}_{\mathbb{P}^*}\left[\log\frac{d\mathbb{P}^*}{d\mathbb{P}}\right]  \\
&= \int_{\mathbb{R}^d}\log h(1,y)\mu_1(dy) - \int_{\mathbb{R}^d}\log h(0,x)\mu_0(dx) < \infty.
\end{align*}
From this and \eqref{eq:5.5} we have 
\[
 \mathbb{E}_{\mathbb{P}^*}\int_0^1|u_t^*|^2dt<\infty, 
\]
whence $\int_0^1|u_t^*|^2dt<\infty$, $\mathbb{P}$-a.s. 
Thus,
\[
 \frac{d\mathbb{P}^*}{d\mathbb{P}}= \exp\left[\int_0^1(u_t^*)^{\mathsf{T}}dW_t
 - \frac{1}{2}\int_0^1|u_t^*|^2dt\right],
\]
and so by the Girsanov-Maruyama theorem, $\{W^*_t\}$ is an $\mathbb{F}$-Brownian motion under $\mathbb{P}^*$.
This means $\mathbb{P}^*=\mathbb{Q}^{u^*}$ and $W^*=B^{u^*}$.
Hence $u^*\in\mathcal{U}$ and $\pi^*\in\Pi_1$. 
Then, the above arguments show that
\begin{equation}
\label{eq:5.6}
 H(P^*|P) = \mathbb{E}_{\mathbb{P}^*}\left[\log\frac{d\mathbb{P}^*}{d\mathbb{P}}\right] =
 \frac{1}{2}J(\pi^*)
 = \int_{\mathbb{R}^d}\log h(1,y)\mu_1(dy) - \int_{\mathbb{R}^d}\log h(0,x)\mu_0(dx) < \infty.
\end{equation}

Step (ii). We will prove the optimality of $\pi^*$. Let $\pi=(\mathbb{Q},B,u,Y)\in\Pi_1$ be arbitrary.
Then the process
\[
 \hat{W}_t := B_t + \int_0^tu_sds, \quad 0\le t\le 1,
\]
is an $m$-dimensional $\mathbb{F}$-Brownian motion under $\hat{\mathbb{P}}$ defined by
\[
 \frac{d\hat{\mathbb{P}}}{d\mathbb{Q}}
 = \exp\left[-\int_0^1u_t^{\mathsf{T}}dB_t - \frac{1}{2}\int_0^1|u_t|^2dt\right].
\]
Since the controlled process $Y_t$ satisfies
\[
 dY_t= b(t,Y_t)dt + \sigma(t,Y_t)d\hat{W}_t 
\]
and $\hat{W}$ is a Brownian motion under $\hat{\mathbb{P}}$, the distribution of $Y$ under $\hat{P}$ is the same as that of $X$ under $\mathbb{P}$. 
Hence,
\begin{align*}
1 &= \mathbb{E}\left[e^{\log h(1,X_1)-\log h(0,X_0)}\right]
   = \mathbb{E}_{\hat{\mathbb{P}}}\left[e^{\log h(1,Y_1)-\log h(0,Y_0)}\right] \\
   &= \mathbb{E}_{\mathbb{Q}}\left[\exp\left\{\log h(1,Y_1)-\log h(0,Y_0)
         -\int_0^1u_t^{\mathsf{T}}dB_t - \frac{1}{2}\int_0^1|u_t|^2dt\right\}\right]  \\
  &\ge \exp\left\{\mathbb{E}_{\mathbb{Q}}\left[\log h (1,Y_1)-\log h(0,Y_0)
          -\int_0^1u_t^{\mathsf{T}}dB_t - \frac{1}{2}\int_0^1|u_t|^2dt\right]\right\},
\end{align*}
where we have used Jensen's inequality in the last inequality. Therefore,
\begin{align*}
 \frac{1}{2}\mathbb{E}_{\mathbb{Q}}\left[\int_0^1|u_t|^2dt\right]
 &\ge \mathbb{E}_{\mathbb{Q}}\left[\log h (1,Y_1)-\log h(0,Y_0)\right]
 =\int_{\mathbb{R}^d}\log h (1,y)\mu_1(dy) - \int_{\mathbb{R}^d}\log h (0,x)\mu_0(dx) \\
 &= \frac{1}{2}J(\pi^{\ast}).
\end{align*}
Consequently, we deduce that $\pi^*$ is an optimal solution to (C). 
Combining this with \eqref{eq:5.6} and Lemma \ref{lem:5.2}, we conclude that $P^*$ is a solution of (S). 
Moreover, the uniqueness of the solution to (P) follows from the strict convexity of the relative entropy.
\end{proof}
\begin{proof}[Proof of Theorem $\ref{thm:3.1}$]

Step (i). To prove \eqref{eq:3.5}, we will use an argument similar to that in the proof of Lemma 1 in \cite[Section 10.11]{lue:1997},
which is presented in the framework of penalty function methods in the constrained optimization.
Assume that
\[
 \limsup_{n\to\infty}\lambda_{n}\gamma(\mathbb{Q}^{u_n}X_1^{-1},\mu_1)^2 = 5\delta
\]
holds for some $\delta>0$. Then there exists a subsequence $\{n_k\}$ such that
\[
 \lim_{k\to\infty}\lambda_{n_k}\gamma(\mathbb{Q}^{u_{n_k}}X_1^{-1},\mu_1)^2 = 5\delta.
\]
Theorem \ref{thm:5.3} means that for any $n\in\mathbb{N}$,
\begin{equation}
\label{eq:5.7}
 J_{\lambda_n}(u_n)\le J^*_{\lambda_n} + \varepsilon_n\le J(\pi^*) + \varepsilon_n = 2H^* + \varepsilon_n, 
\end{equation}
where the second inequality in \eqref{eq:5.7} follows from the facts
  that $u^*\in \mathcal{U}$ and $\gamma(\mathbb{Q} u^*(X_1)^{-1},\mu_1)=0$.
In particular, the sequence $\{J_{\lambda_{n_k}}(u_{n_k})\}_{k=1}^{\infty}$ is bounded, whence
we can take a further subsequence $\{n_{k_m}\}$ such that
\[
 \lim_{m\to\infty}J_{\bar{\lambda}_m}(\bar{u}_m) = \kappa:=\limsup_{k\to\infty}J_{\lambda_{n_k}}(u_{n_k})<\infty,
\]
where $\bar{\lambda}_m=\lambda_{n_{k_m}}$ and $\bar{u}_m=u_{n_{k_m}}$.
Then put $\bar{\gamma}_m=\gamma(\mathbb{Q}^{\bar{u}_m}X_1^{-1},\mu_1)$.

Now choose $m_0$ such that $\kappa<J_{\bar{\lambda}_{m_0}}(\bar{u}_{m_0})+ \delta$, and then select $m_1$ such that 
$J_{\bar{\lambda}_{m_1}}(\bar{u}_{m_1})<\kappa +\delta$, 
$\bar{\lambda}_{m_1}>7\bar{\lambda}_{m_0}$ and $3\delta +\bar{\varepsilon}_{m_0} < \bar{\lambda}_{m_1}\bar{\gamma}_{m_1}^2 <7\delta$, 
where $\bar{\varepsilon}_m=\varepsilon_{n_{k_m}}$. 
With these choices, we get
\begin{align*}
 \kappa &< J_{\bar{\lambda}_{m_0}}(\bar{u}_{m_0}) + \delta
 \le J^*_{\bar{\lambda}_{m_0}} +\bar{\varepsilon}_{m_0} + \delta
 \le J_{\bar{\lambda}_{m_0}}(\bar{u}_{m_1}) +\bar{\varepsilon}_{m_0} + \delta \\
 &= \mathbb{E}_{\mathbb{Q}^{\bar{u}_{m_1}}}\int_0^1|\bar{u}_{m_1}(t)|^2dt + \frac{\bar{\lambda}_{m_0}}{\bar{\lambda}_{m_1}}\bar{\lambda}_{m_1}
       \bar{\gamma}_{m_1}^2 + \bar{\varepsilon}_{m_0} + \delta
 < \mathbb{E}_{\mathbb{Q}^{\bar{u}_{m_1}}}\int_0^1|\bar{u}_{m_1}(t)|^2dt + \frac{\bar{\lambda}_{m_1}}{7}\bar{\gamma}_{m_1}^2 +\bar{\varepsilon}_{m_0} + \delta \\
 &< \mathbb{E}_{\mathbb{Q}^{\bar{u}_{m_1}}}\int_0^1|\bar{u}_{m_1}(t)|^2dt + 2\delta +\bar{\varepsilon}_{m_0}
 < \mathbb{E}_{\mathbb{Q}^{\bar{u}_{m_1}}}\int_0^1|\bar{u}_{m_1}(t)|^2dt + \bar{\lambda}_{m_1}\bar{\gamma}_{m_1}^2 - \delta
   = J_{\bar{\lambda}_{m_1}}(\bar{u}_{m_1}) -\delta
 < \kappa,
\end{align*}
which is impossible. Thus \eqref{eq:3.5} follows.

Step (ii).
Let us prove the tightness of the sequence of $Q_n:=\mathbb{Q}^{u_n}(X_1)^{-1}$, $n\in\mathbb{N}$.
From Theorem 7.3 in Billingsley \cite{bil:1999} it is sufficient to show that for any $\delta>0$,
\begin{equation}
\label{eq:5.9}
 \lim_{h\to 0}\sup_{n\ge 1}\mathbb{Q}^{u_n}\left(\max_{\substack{0\le s<t\le 1\\ 0<t-s<h}}|X_t-X_s|>\delta\right)=0.
\end{equation}
To this end, consider the process
\[
 Y_n(t) := X_0 + \int_0^t\left[b(s,X_s) + \sigma(s,X_s) u_n(s)\right]ds, \quad 0\le t\le 1.
\]
This process satisfies
\begin{align*}
 \sup_{n}\mathbb{Q}^{u_n}\left(\max_{\substack{0\le s<t\le 1\\ 0<t-s<h}}|Y_n(t)-Y_n(s)|>\delta/2\right)
 &\le \frac{4}{\delta^2}\sup_n\mathbb{E}_{\mathbb{Q}^{u_n}}\max_{\substack{0\le s<t\le 1\\ 0<t-s<h}}|Y_n(t)-Y_n(s)|^2 \\
 &\le\frac{Ch}{\delta^2}\sup_n\mathbb{E}_{\mathbb{Q}^{u_n}}\max_{\substack{0\le s<t\le 1\\ 0<t-s<h}}\int_s^t\left[1+|u_{n}(r)|^2\right]dr \\
 &\le\frac{Ch}{\delta^2}\left[1 + \sup_n\mathbb{E}_{\mathbb{Q}^{u_n}}\int_0^1|u_{n}(r)|^2dr\right].
\end{align*}
For a fixed $N\ge 1$ take $\{t_i\}$ so that $0=t_0<t_1<\cdots t_N = 1$ and $t_{\ell}-t_{\ell -1}=1/N$. Then from Theorem 7.4 in Billingsley \cite{bil:1999} it follows that 
\begin{align*}
  \mathbb{Q}^{u_n}\left(\max_{\substack{0\le s<t\le 1\\ 0<t-s<1/N}}\left|\int_s^t\sigma(r,X_r)dB_r^{u_n}\right|>\delta/2 \right) 
 &\le \sum_{\ell=1}^N\mathbb{Q}^{u_n}\left(\sup_{t_{\ell -1}\le s\le t_{\ell}}\left|\int_{t_{\ell-1}}^s\sigma(r,X_r)dB_r^{u_n}\right|> \delta/6\right) \\ 
 &\le \frac{C}{\delta^4}\sum_{\ell=1}^N\mathbb{E}_{\mathbb{Q}^{u_n}}\left|\int_{t_{\ell-1}}^{t_{\ell}}|\sigma(r,X_r)|^2ds\right|^2 \\ 
 &\le \frac{C}{\delta^4N}, 
\end{align*}
where $C>0$ is a constant, and we have used Chebyshef's inequality, the Burkholder-Davis-Gundy inequality, and the boundedness of $\sigma$. 
So we have 
\[
 \lim_{h\to 0}\sup_{n\ge 1}\mathbb{Q}^{u_n}\left(\max_{\substack{0\le s<t\le 1\\ 0<t-s<h}}\left|\int_s^t \sigma(r,X_r)dB_r^{u_n}\right|>\delta/2\right)=0.
\]
Hence using
\begin{align*}
 &\mathbb{Q}^{u_n}\left(\max_{\substack{0\le s<t\le 1\\ 0<t-s<h}}|X_t-X_s|>\delta\right) \\
 &\le \mathbb{Q}^{u_n}\left(\max_{\substack{0\le s<t\le 1\\ 0<t-s<h}}|Y_n(t)-Y_n(s)|>\delta/2\right)
     + \mathbb{Q}^{u_n}\left(\max_{\substack{0\le s<t\le 1\\ 0<t-s<h}}\left|\int_s^t \sigma(r,X_r)dB_r^{u_n}\right|>\delta/2\right),
\end{align*}
we derive \eqref{eq:5.9}.
Therefore the sequence $\{Q_{n}\}$ is tight, i.e.,
there exist a subsequence $\{n_k\}$ and $\hat{P}$ such that $Q_{n_k}$ weakly converges to $\hat{P}$.
From this and Proposition \ref{prop:2.1} we obtain
\[
 \gamma(\hat{P}_1,\mu_1)\le \gamma(\hat{P}_1, (Q_{n_k})_1)
  + \gamma(\mathbb{Q}^{u_{n_k}}(X_1)^{-1},\mu_1)\to 0,
\]
as $k\to\infty$, whence $\hat{P}_1=\mu_1$.
Further, by the lower semi-continuity of $Q\mapsto H(Q|P)$, which follows from the convexity of $x\mapsto x\log x - x +1$ and Fatou's lemma,
\begin{align*}
 H^*\le H(\hat{P} | P) \le\liminf_{k\to\infty} H(Q_{n_k}|P).
\end{align*}
Further, in view of \eqref{eq:5.7}, as in the proof of Theorem \ref{thm:5.3} we have $2H(Q_{n_k}|P)=J(u_{n_k})\le J_{\lambda_{n_k}}(u_{n_k})$. So, 
\[
 H^*\le H(\hat{P} | P)\le \frac{1}{2}\liminf_{k\to\infty}J_{\lambda_{n_k}}(u_{n_k})\le H^*. 
\]
This means that $\hat{P}$ is an optimal solution to (S), whence by uniqueness, we obtain $\hat{P}=P^*$.
What we have shown now is that each subsequence $\{Q_{n_k}\}$ contain a further subsequence
$\{Q_{n_{k_j}}\}$ converging weakly to $P^*$.
Applying Theorem 2.6 in \cite{bil:1999}, we deduce that $\{Q_n\}$ converges weakly to $P^*$ as $n\to\infty$.
Again by the lower semi-continuity of the relative entropy,
\[
 \liminf_{n\to\infty}J_{\lambda_n}(u_n)\ge 2\liminf_{n\to\infty}H(Q_n|P) \ge 2H^*\ge \limsup_{n\to\infty} J_{\lambda_n}(u_n).
\]
This shows \eqref{eq:3.6}.

Step (iii). Since $u^*\in\mathcal{U}$, it follows from Theorem \ref{thm:5.3} that
\[
 2H^*=J^*=J_{\lambda}(u^*)\ge \inf_{u\in\mathcal{U}}J_{\lambda}(u), \quad \lambda>0,
\]
whence
\[
 H^*\ge \frac{1}{2}\sup_{\lambda>0}\inf_{u\in\mathcal{U}}J_{\lambda}(u).
\]

On the other hand, by the definition of $u_n$,
\[
 J_{\lambda_n}(u_n)\le \sup_{\lambda>0}\inf_{u\in\mathcal{U}}J_{\lambda}(u) + \varepsilon_n.
\]
Taking the limit and using \eqref{eq:3.6},  we get
\[
 H^* \le \frac{1}{2}\sup_{\lambda>0}\inf_{u\in\mathcal{U}}J_{\lambda}(u),
\]
which completes the proof of the theorem.
\end{proof}

\subsection*{Acknowledgements}

This study is supported by JSPS KAKENHI Grant Number JP21K03364.

\bibliographystyle{hplain}
\bibliography{../mybib}

\end{document}